\begin{document}
\baselineskip = 16pt

\newcommand \ZZ {{\mathbb Z}}
\newcommand \NN {{\mathbb N}}
\newcommand \RR {{\mathbb R}}
\newcommand \PR {{\mathbb P}}
\newcommand \AF {{\mathbb A}}
\newcommand \QQ{{\mathbb Q}}
\newcommand \GG {{\mathbb G}}
\newcommand \bcA {{\mathscr A}}
\newcommand \bcC {{\mathscr C}}
\newcommand \bcD {{\mathscr D}}
\newcommand \bcF {{\mathscr F}}
\newcommand \bcG {{\mathscr G}}
\newcommand \bcH {{\mathscr H}}
\newcommand \bcM {{\mathscr M}}
\newcommand \bcJ {{\mathscr J}}
\newcommand \bcL {{\mathscr L}}
\newcommand \bcO {{\mathscr O}}
\newcommand \bcP {{\mathscr P}}
\newcommand \bcQ {{\mathscr Q}}
\newcommand \bcR {{\mathscr R}}
\newcommand \bcS {{\mathscr S}}
\newcommand \bcV {{\mathscr V}}
\newcommand \bcW {{\mathscr W}}
\newcommand \bcX {{\mathscr X}}
\newcommand \bcY {{\mathscr Y}}
\newcommand \bcZ {{\mathscr Z}}
\newcommand \goa {{\mathfrak a}}
\newcommand \gob {{\mathfrak b}}
\newcommand \goc {{\mathfrak c}}
\newcommand \gom {{\mathfrak m}}
\newcommand \gon {{\mathfrak n}}
\newcommand \gop {{\mathfrak p}}
\newcommand \goq {{\mathfrak q}}
\newcommand \goQ {{\mathfrak Q}}
\newcommand \goP {{\mathfrak P}}
\newcommand \goM {{\mathfrak M}}
\newcommand \goN {{\mathfrak N}}
\newcommand \uno {{\mathbbm 1}}
\newcommand \Le {{\mathbbm L}}
\newcommand \Spec {{\rm {Spec}}}
\newcommand \Gr {{\rm {Gr}}}
\newcommand \Pic {{\rm {Pic}}}
\newcommand \Jac {{{J}}}
\newcommand \Alb {{\rm {Alb}}}
\newcommand \Corr {{Corr}}
\newcommand \Chow {{\mathscr C}}
\newcommand \Sym {{\rm {Sym}}}
\newcommand \Prym {{\rm {Prym}}}
\newcommand \cha {{\rm {char}}}
\newcommand \eff {{\rm {eff}}}
\newcommand \tr {{\rm {tr}}}
\newcommand \Tr {{\rm {Tr}}}
\newcommand \pr {{\rm {pr}}}
\newcommand \ev {{\it {ev}}}
\newcommand \cl {{\rm {cl}}}
\newcommand \interior {{\rm {Int}}}
\newcommand \sep {{\rm {sep}}}
\newcommand \td {{\rm {tdeg}}}
\newcommand \alg {{\rm {alg}}}
\newcommand \im {{\rm im}}
\newcommand \gr {{\rm {gr}}}
\newcommand \op {{\rm op}}
\newcommand \Hom {{\rm Hom}}
\newcommand \Hilb {{\rm Hilb}}
\newcommand \Sch {{\mathscr S\! }{\it ch}}
\newcommand \cHilb {{\mathscr H\! }{\it ilb}}
\newcommand \cHom {{\mathscr H\! }{\it om}}
\newcommand \colim {{{\rm colim}\, }} 
\newcommand \End {{\rm {End}}}
\newcommand \coker {{\rm {coker}}}
\newcommand \id {{\rm {id}}}
\newcommand \van {{\rm {van}}}
\newcommand \spc {{\rm {sp}}}
\newcommand \Ob {{\rm Ob}}
\newcommand \Aut {{\rm Aut}}
\newcommand \cor {{\rm {cor}}}
\newcommand \Cor {{\it {Corr}}}
\newcommand \res {{\rm {res}}}
\newcommand \red {{\rm{red}}}
\newcommand \Gal {{\rm {Gal}}}
\newcommand \PGL {{\rm {PGL}}}
\newcommand \Bl {{\rm {Bl}}}
\newcommand \Sing {{\rm {Sing}}}
\newcommand \spn {{\rm {span}}}
\newcommand \Nm {{\rm {Nm}}}
\newcommand \inv {{\rm {inv}}}
\newcommand \codim {{\rm {codim}}}
\newcommand \Div{{\rm{Div}}}
\newcommand \sg {{\Sigma }}
\newcommand \DM {{\sf DM}}
\newcommand \Gm {{{\mathbb G}_{\rm m}}}
\newcommand \tame {\rm {tame }}
\newcommand \znak {{\natural }}
\newcommand \lra {\longrightarrow}
\newcommand \hra {\hookrightarrow}
\newcommand \rra {\rightrightarrows}
\newcommand \ord {{\rm {ord}}}
\newcommand \Rat {{\mathscr Rat}}
\newcommand \rd {{\rm {red}}}
\newcommand \bSpec {{\bf {Spec}}}
\newcommand \Proj {{\rm {Proj}}}
\newcommand \pdiv {{\rm {div}}}
\newcommand \CH {{\it {CH}}}
\newcommand \wt {\widetilde }
\newcommand \ac {\acute }
\newcommand \ch {\check }
\newcommand \ol {\overline }
\newcommand \Th {\Theta}

\newenvironment{pf}{\par\noindent{\em Proof}.}{\hfill\framebox(6,6)
\par\medskip}

\newtheorem{theorem}[subsection]{Theorem}
\newtheorem{conjecture}[subsection]{Conjecture}
\newtheorem{proposition}[subsection]{Proposition}
\newtheorem{lemma}[subsection]{Lemma}
\newtheorem{remark}[subsection]{Remark}
\newtheorem{remarks}[subsection]{Remarks}
\newtheorem{definition}[subsection]{Definition}
\newtheorem{corollary}[subsection]{Corollary}
\newtheorem{example}[subsection]{Example}
\newtheorem{examples}[subsection]{examples}

\title{On the closed embedding of the product of theta divisors into  product of Jacobians  and Chow groups}
\author{Kalyan Banerjee}

\address{Indian Statistical Institute, Bangalore Center, Bangalore 560059}


\email{kalyanb$_{-}$vs@isibang.ac.in}

\footnotetext{Mathematics Classification Number: 14C25, 14D05, 14D20,
 14D21}
\footnotetext{Keywords: Pushforward homomorphism, Theta divisor, Jacobian varieties, Chow groups, higher Chow groups.}

\begin{abstract}
In this article we generalize the injectivity of the push-forward homomorphism at the level of Chow groups, induced by the closed embedding of  $\Sym^m C$ into $\Sym^n C$ for $m\leq n$, where $C$ is a smooth projective curve, to symmetric powers of a smooth projective variety of higher dimension. We also prove the analog of this theorem for product of symmetric powers of smooth projective varieties. As an application we prove the injectivity of the push-forward homomorphism  at the level of Chow groups, induced by the closed embedding of self product of theta divisor into the self product of the Jacobian of a smooth projective curve.
\end{abstract}

\maketitle


\section{Introduction}
Let $C$ be a smooth projective curve and let $\Sym^n C$ denote the $n$-th symmetric power of $C$. In \cite{Collino} Collino proved that if we consider the embedding of $\Sym^m C$ into $\Sym^n C$, then it induces an injective push-forward homomorphism at the level of Chow groups. In this text the author is curious whether the same thing holds true for for higher dimensional smooth projective varieties. In \cite{BI}, we prove that the embedding of $\Sym^m C$ into $\Sym^n C$ induces an injective push-forward homomorphism when we consider the Higher Chow groups. Also an analog of Collino's theorem was proved in \cite{BI} for some open subschemes of $\Sym^m C$ mapping into an open subscheme of $\Sym^n C$.

The first theorem in this direction is the generalisation of Collino's theorem for higher dimensional smooth projective varieties and for products of symmetric powers of smooth projective varieties, where we work with Chow groups with $\QQ$-coefficients. Let us denote the Chow group with $\QQ$-coefficients, for a projective variety $X$ by $\CH_*(X)$.

\textit{The push-forward homomorphism $i_*$ from $\CH_*(\Sym^m X)$ to $\CH_*(\Sym^n X)$ is injective.}

\textit{The push-forward homomorphism $i_{m_1,m_2*}^{n_1,n_2}$ from $\CH_*(\Sym^{m_1} X\times \Sym^{m_2} Y)$ to $\CH_*(\Sym^{n_1} X\times \Sym^{n_2} Y)$ is injective.}

The method used to prove this theorems are almost the same as in \cite{Collino}, the second theorem stated above requires some  modification of the proof given in Collino's paper \cite{Collino}[corllary $1$ and theorem $1$]. Next we try to understand the kernel of the push-forward homomorphism induced by the natural regular morphism from $\Sym^k X\times \Sym^k X$ to $\Sym^{2k+n}X$, for some positive integer $n$. The elements in the kernel have some nice form, specially when we consider $k=1$ and   zero cycles on $X^2$.



We use the machinary derived from Collino's theorem, for product of Jacobians and the embedding of product of Theta divisors into the product of Jacobians, to deduce that this embedding induces injective push-forward homomorphism at the level of Chow groups.

\textit{Let $C$ be a smooth projective curve of genus $g$. Consider the natural morphism from $\Sym^{g} C\times \cdots\times \Sym^{g} C$ to $J(C)\times \cdots\times J(C)$. Let $\Th$ be the theta divisor embedded into $J(C)$.
Let $j$ denote the closed embedding of $\Th\times \cdots\times \Th$
into $J(C)\times\cdots\times J(C)$. Then $j_*$ from $\CH_k(\Th\times\cdots\times \Th)$ to $\CH_k(J(C)\times\cdots J(C))$ is injective for $k\geq 0$.}

The method used to prove this theorem is the fact that the $g$-th symmetric power of a curve $C$ is birational to $J(C)$ and $g-1$-th symmetric power of $C$ is birational to $\Th_C$, the theta divisor of $J(C)$. Also we make use of the fact that $\Sym^n C$ is a projective bundle over $J(C)$ for high enough $n$.

{\small \textbf{Acknowledgements:} The author wishes to thank Jaya Iyer and Manish Kumar for useful discussions relevant to the proof of some theorems present in this paper. The author also wishes to thank the ISF-UGC grant for funding this project and hospitality of Indian Statistical Institute, Bangalore Center for hosting this project.}

\section{Collino's theorem for higher dimensional varieties and products of symmetric powers}
\label{Collino}

Let $X$ be a smooth projective curve defined over  an algebraically closed field. Let $\Sym^n X$ denote the $n$-th symmetric power of $X$. Let us fix a point $p$ in $X$. Consider the closed embedding $i_{m,n}$ of $\Sym^m X$ to $\Sym^n X$, given by
$$[x_1,\cdots,x_m]\mapsto [x_1,\cdots,x_m,p,\cdots,p]$$
where $[x_1,\cdots,x_m]$ denote the unordered $m$-tuple of points in $\Sym^m X$. Then the push-forward homomorphism $i_{m,n*}$ from $\CH_*(\Sym^m X)$ to $\CH_*(\Sym^n X)$ is injective as proved in \cite[Theorem 1]{Collino}. In this section we prove that the same holds for an arbitrary smooth projective variety $X$. That is the push-forward homomorphism $i_{m,n*}$ from $\CH_*(\Sym^m X)$ to $\CH_*(\Sym^n X)$ is injective, where $i_{m,n}$ is defined as before.  To prove that we follow the approach by Collino in \cite{Collino},  the argument present here is a generalisation of the arguments in \cite{Collino}. It is straightforward for the first case when $X$ is a smooth projective variety but requires minor modifications when we want to prove it for product of symmetric powers, but we write it for our convenience.

Let $\Gamma$ be the correspondence given by
$$\pi_n\times \pi_m(\Gamma')$$
supported on $\Sym^m X\times \Sym^n X$ where $\Gamma'$ is the graph of the projection $pr_{n,m}$ from $X^n$ to $X^m$ and $\pi_{n}$ is the natural morphism from
$X^n$ to $\Sym^n X$. Let $g_*$ be the homomorphism induced by $\Gamma$ at the level of algebraic cycles.

First we prove the following lemma.
\begin{lemma}
\label{lemma1}
The homomorphism $g_{*}\circ i_{m,n*}$ at the level of the Chow groups with $\QQ$-coefficients, is induced by the cycle $(i_{m,n}\times id)^*\Gamma$ on $\Sym^m X\times \Sym^m X$.
\end{lemma}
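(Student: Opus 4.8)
The plan is to realize $g_*\circ i_{m,n*}$ as a composition of two operations on Chow groups and to collapse that composition onto the single Gysin pullback $(i_{m,n}\times\id)^*\Gamma$ by combining the flat base-change formula with the projection formula. I regard $\Gamma$ as a cycle class on $\Sym^n X\times\Sym^m X$ (the image under $\pi_n\times\pi_m$ of the graph of $pr_{n,m}$), so that $g_*$ acts by $g_*(\beta)=p_{2*}\big(\Gamma\cdot p_1^*\beta\big)$ for $\beta\in\CH_*(\Sym^n X)$, where $p_1,p_2$ denote the two projections of $\Sym^n X\times\Sym^m X$. Since $X$ is a smooth projective curve, each $\Sym^k X$ is smooth, so these intersection products are the ordinary ones; moreover $i_{m,n}$ is a closed embedding of smooth varieties, hence a regular (l.c.i.) embedding, so the Gysin pullback $(i_{m,n}\times\id)^*$ is defined and the cycle $(i_{m,n}\times\id)^*\Gamma$ on $\Sym^m X\times\Sym^m X$ acts by $\alpha\mapsto q_{2*}\big((i_{m,n}\times\id)^*\Gamma\cdot q_1^*\alpha\big)$, with $q_1,q_2$ the projections of $\Sym^m X\times\Sym^m X$.

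First I would record that the square with vertices $\Sym^m X\times\Sym^m X$, $\Sym^n X\times\Sym^m X$, $\Sym^m X$, $\Sym^n X$, whose horizontal maps are $i_{m,n}\times\id$ and $i_{m,n}$ and whose vertical maps are $q_1$ and $p_1$, is cartesian (it is obtained from $i_{m,n}$ by base change along the flat projection $p_1$). Since $p_1$ is flat and $i_{m,n}$ is proper, the compatibility of flat pullback with proper pushforward yields the base-change identity $p_1^*\,i_{m,n*}=(i_{m,n}\times\id)_*\,q_1^*$. Applying this to $\alpha\in\CH_*(\Sym^m X)$ gives
$$
\Gamma\cdot p_1^*\,i_{m,n*}\alpha \;=\; \Gamma\cdot (i_{m,n}\times\id)_*\,q_1^*\alpha .
$$

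Next I would invoke the projection formula for the regular embedding $h:=i_{m,n}\times\id$, namely $\Gamma\cdot h_*(q_1^*\alpha)=h_*\big(h^*\Gamma\cdot q_1^*\alpha\big)$, which is legitimate precisely because $h$ is l.c.i. and the relevant square is cartesian. Substituting and then pushing forward along $p_2$, and using the evident factorisation $p_2\circ(i_{m,n}\times\id)=q_2$ (the second coordinate is untouched, so $p_{2*}\,h_*=q_{2*}$), I obtain
$$
g_*(i_{m,n*}\alpha)=p_{2*}\big(\Gamma\cdot p_1^*\,i_{m,n*}\alpha\big)=q_{2*}\big((i_{m,n}\times\id)^*\Gamma\cdot q_1^*\alpha\big),
$$
which is exactly the action of $(i_{m,n}\times\id)^*\Gamma$ on $\alpha$, proving the lemma.

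The step I expect to require the most care is the projection formula in the third paragraph: one must ensure that the Gysin pullback $(i_{m,n}\times\id)^*$ commutes with the proper pushforward $h_*$ and interacts correctly with the product by $\Gamma$. This is guaranteed by Fulton's refined intersection theory for l.c.i. morphisms, and I would phrase the products as operations on rational-equivalence classes via refined Gysin operations rather than naive cycle intersection, so that no transversality hypothesis on $\Gamma$ is needed; the passage to $\QQ$-coefficients plays no role at this stage.
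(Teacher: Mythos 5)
Your proof is correct and follows essentially the same route as the paper's: the paper also writes $g_*i_*(Z)=\pr_{\Sym^m X*}\big((i\times\id)_*(Z\times\Sym^m X)\cdot\Gamma\big)$ (your flat base-change step, stated there as the identity $i_*(Z)\times\Sym^m X=(i\times\id)_*(Z\times\Sym^m X)$), then applies the projection formula and the factorisation $\pr_{\Sym^m X}\circ(i\times\id)=\pr_{\Sym^m X}$. The only caveat: the lemma is invoked in this section for an \emph{arbitrary} smooth projective variety $X$, in which case $\Sym^k X$ is singular, so your justification ``each $\Sym^k X$ is smooth, so these intersection products are the ordinary ones'' covers only the curve case; for higher-dimensional $X$ one defines the products via the identification $\CH_*(X^k/S_k)_{\QQ}\cong\CH_*(X^k)_{\QQ}^{S_k}$ for quotients by finite groups (\cite{Fulton}, Example 1.7.6), which is exactly why the statement carries $\QQ$-coefficients --- so, contrary to your closing remark, the passage to $\QQ$-coefficients does play a role here.
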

\begin{proof}
Let's denote $i_{m,n*}$ as $i_*$.
We have
$$g_*i_*(Z)=pr_{\Sym^m X*}(i_*(Z)\times \Sym^m X.\Gamma)\;.$$
The above expression can be written as
$$pr_{\Sym^m X*}((i\times id)_*(Z\times \Sym^m X).\Gamma)\;.$$
By the projection formula the above is equal to
$$pr_{\Sym^m X*}\circ(i\times id)_*
((Z\times \Sym^m X). (i\times id)^*\Gamma)\;.$$
Since $pr_{\Sym^m X}\circ(i\times id)$
is the projection $pr_{\Sym^m X}$ we get that the above is equal to
$$pr_{\Sym^m X*}((Z\times \Sym^m X). (i\times id)^*\Gamma)\;.$$
Here the above two projections are taken respectively on $\Sym^n X\times \Sym^m X$ and on $\Sym^m X\times\Sym^m X$.
So we get that $g_*\circ i_*$ is induced by $(i\times id)^*\Gamma$.

\end{proof}

Now let us consider the closed embedding $\Sym^{m-1}X$ into $\Sym^m X$. Let $\rho$ be the embedding of the complement of $\Sym^{m-1}X$ in $\Sym^m X$. Then we have the following proposition.
\begin{proposition}
\label{prop1}
At the level of the Chow group with rational co-efficients we have

$$\rho^{*}\circ g_*\circ i_*=\rho^{*}\;.$$
\end{proposition}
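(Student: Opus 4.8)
The plan is to reduce the statement to a geometric analysis of the correspondence $(i_{m,n}\times\id)^*\Gamma$ supplied by Lemma \ref{lemma1}, and then to show that after restricting to the open set $U:=\Sym^m X\setminus\Sym^{m-1}X$ this correspondence collapses to the diagonal with multiplicity one. First I would invoke Lemma \ref{lemma1}, which identifies $g_*\circ i_*$ with the map induced by the cycle $W:=(i_{m,n}\times\id)^*\Gamma$ on $\Sym^m X\times\Sym^m X$. Unwinding the definition of $\Gamma$ as the image of the graph of $pr_{n,m}$, a pair $(E,D')$ lies in the support of $W$ precisely when $D'$ is an $m$-subcycle of $i_{m,n}(E)=E+(n-m)p$. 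This is the geometric description I will exploit.

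Next I would record the compatibility of the correspondence action with restriction to an open subset. Writing $\rho\colon U\hookrightarrow\Sym^m X$ and viewing $g_*\circ i_*$ as the map $\alpha\mapsto pr_{2*}(pr_1^*\alpha\cdot W)$, flat base change along the open immersion $\id\times\rho$ shows that $\rho^*\circ g_*\circ i_*$ is induced by the restricted cycle $W\cap(\Sym^m X\times U)$, regarded as a correspondence from $\Sym^m X$ to $U$. Everything then comes down to computing this restriction.

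The heart of the argument is the following observation. Imposing $D'\in U$, i.e.\ that $p$ does not lie in the support of $D'$, together with $D'\le E+(n-m)p$ and $\deg D'=m=\deg E$, forces $E$ itself to avoid $p$ and $D'=E$; hence the support of $W\cap(\Sym^m X\times U)$ is exactly the diagonal $\Delta_U$. To pin down the multiplicity I would use that, since $E+(n-m)p$ has $E$ disjoint from $p$, the symmetric power $\Sym^n X$ splits \'etale-locally near $E+(n-m)p$ as a product of a neighbourhood of $E$ in $\Sym^m X$ and a neighbourhood of $(n-m)p$ in $\Sym^{n-m}X$; the branch of $W$ through $(E,E)$ is the graph of the resulting projection, and therefore maps isomorphically onto $U$, contributing with multiplicity one. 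Consequently $W\cap(\Sym^m X\times U)=\Delta_U$ is the graph of $\rho$, and the correspondence it defines is exactly $\rho^*$, which gives $\rho^*\circ g_*\circ i_*=\rho^*$.

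The main obstacle is this last multiplicity computation. It is immediate set-theoretically that the restricted correspondence is supported on the diagonal, but because the divisor $E+(n-m)p$ is non-reduced at $p$ the map $\pi_n$ is ramified there, so one must verify that the relevant branch nonetheless enters with multiplicity one. The \'etale-local product decomposition of $\Sym^n X$ away from the collision of the moving points with $p$ is precisely what makes this work, and it is the step I would write out most carefully.
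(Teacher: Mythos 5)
Your proposal is correct and follows essentially the same route as the paper: both reduce via Lemma \ref{lemma1} to the cycle $(i_{m,n}\times\id)^*\Gamma$ and show that away from $\Sym^{m-1}X$ it is the diagonal with multiplicity one, so that the extra components die under $\rho^*$. The only difference is cosmetic — you restrict to the open set before computing, and you spell out the multiplicity-one step via the \'etale-local product decomposition of $\Sym^n X$ near $E+(n-m)p$, a point the paper instead delegates to Collino's Proposition 1.
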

\begin{proof}
To prove the proposition we prove that
$$(i\times \id)^{-1}\Gamma=\Delta \cup D$$
where $\Delta $ means the diagonal in $\Sym^{m}X\times \Sym^{m}X$ and $D$ is a closed subscheme of $\Sym^{m}X\times \Sym^{m-1}X$.
For that we write out
$$(i\times \id)^{-1}\Gamma\;,$$
that is equal to
$$(i\times \id)^{-1}(\pi_n\times\pi_m)Graph(pr_{n,m})\;.$$
The above is equal to
$$(i\times \id)^{-1}(\pi_n\times\pi_m)
\{((x_1\cdots,x_n),(x_1,\cdots,x_m))|x_i\in X \}$$
that is
$$(i\times \id)^{-1}\{([x_1,\cdots,x_n],[x_1,\cdots,x_m])|x_i\in X\}\;.$$
Call the set
$$\{([x_1,\cdots,x_n],[x_1,\cdots,x_m])|x_i\in X,\}$$
as $B$, and the set
$$(i\times \id)^{-1}\{([x_1,\cdots,x_n],[x_1,\cdots,x_m])|x_i\in X\}\;.$$
as $A$. The set $A$ is of the form
$$\{([x'_1,\cdots,x'_m],[y'_1,\cdots,y'_m])|
([x_1',\cdots,x_m',p,\cdots,p],[y_1',\cdots,y_m'])\in B\}\;.$$
So the set $A$ can be written as the union of
$$\{([x_1'\cdots,x_m'],[x_1'\cdots,x_m'])|x_i'\in X,\}$$
and
$$\{([x_1'\cdots,x_m'],[x_1'\cdots,p,x_m'])|x_i'\in X\}\;,$$
that is the union
$$\Delta\cup D$$
where $\Delta $ is the diagonal in the scheme $\Sym^{m}X\times \Sym^{m}X$ and $D$ is a closed subscheme in $\Sym^m X\times \Sym^{m-1}X\;.$
Therefore arguing as in \cite{Collino} [proposition 1]we get that
$$(i\times id)^*(\Gamma)=\Delta+Y$$
as an algebraic cycle,
where $Y$ is supported on $\Sym^m X\times \Sym^{m-1}X$.
So $g_*i_*(Z)$ is equal to
$$\pr_{\Sym^m X*}[(\Delta+Y).(Z\times \Sym^m X)]=Z+Z_1$$
where $Z_1$ is supported on $\Sym^{m-1}X$. For the above we have to take some care about defining the intersection product. This Can be done since $X$ is smooth and we have the identification $\CH_*(X/G)=\CH_*(X)^G$ \cite{Fulton}[Example 1.7.6]. So
$$\rho^{*}g_*i_*=\rho^{*}(Z+Z_1)=\rho^{*}(Z)$$
since $\rho^{*}(Z_1)=0$. Hence the proposition is proved.
\end{proof}

Now we prove that the push-forward homomorphism $i_*$ from $\CH_*(\Sym^m X)$ to $CH_*(\Sym^n X)$ is injective.

\begin{theorem}
\label{theorem1}
The push-forward homomorphism $i_*$ from $\CH^*(\Sym^m X)$ to $\CH^*(\Sym^n X)$ is injective.
\end{theorem}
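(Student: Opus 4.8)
The plan is to argue by induction on $m$, with the cycle-theoretic computation carried out inside Proposition \ref{prop1} serving as the engine. The base case $m=0$ is immediate: $\Sym^0 X$ is a point, and $i_*$ sends its fundamental class to the class of $[p,\dots,p]$ in $\CH_0(\Sym^n X)$, which is nonzero since it has degree one; hence $i_*$ is injective there.

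For the inductive step, I would start from a class $Z \in \CH_*(\Sym^m X)$ with $i_*(Z) = 0$ and apply the correspondence $g_*$. By Lemma \ref{lemma1} the composite $g_* \circ i_*$ is induced by $(i\times\id)^*\Gamma$, and the proof of Proposition \ref{prop1} shows that this cycle equals $\Delta + Y$ with $Y$ supported on $\Sym^m X \times \Sym^{m-1} X$. Consequently
$$0 = g_* i_*(Z) = Z + Z_1,$$
where $Z_1$ is supported on the closed subscheme $\Sym^{m-1} X \subset \Sym^m X$. Thus $Z = -Z_1$ is rationally equivalent to a cycle supported on $\Sym^{m-1} X$, and in particular $\rho^* Z = -\rho^* Z_1 = 0$.

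I would then feed this vanishing into the localization exact sequence for the closed embedding $\iota \colon \Sym^{m-1} X \hookrightarrow \Sym^m X$ with open complement $\rho$, namely
$$\CH_*(\Sym^{m-1} X) \xrightarrow{\iota_*} \CH_*(\Sym^m X) \xrightarrow{\rho^*} \CH_*(\Sym^m X \setminus \Sym^{m-1} X) \to 0.$$
Since $\rho^* Z = 0$, exactness produces a class $W \in \CH_*(\Sym^{m-1} X)$ with $Z = \iota_* W$. Applying $i_*$ and using the functoriality of push-forward together with the identity $i_{m,n}\circ \iota = i_{m-1,n}$ on the embeddings, I obtain $0 = i_*(Z) = i_{m-1,n*}(W)$. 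By the inductive hypothesis $i_{m-1,n*}$ is injective, so $W = 0$ and therefore $Z = \iota_* W = 0$, which closes the induction.

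The genuine difficulty is not the inductive descent but the cycle identity $(i\times\id)^*\Gamma = \Delta + Y$ underpinning the equation $g_* i_*(Z) = Z + Z_1$: one must make sense of the intersection products on the singular quotient $\Sym^m X \times \Sym^m X$ and verify that the diagonal appears with multiplicity exactly one while the residual component is confined to $\Sym^m X \times \Sym^{m-1} X$. This is exactly where smoothness of $X$ and the passage to $\QQ$-coefficients are used, through the identification $\CH_*(X/G) = \CH_*(X)^G$. Since this verification is already accomplished in Proposition \ref{prop1}, the remaining content of the theorem is the formal transfer of the problem to $\Sym^{m-1}X$ via the localization sequence, which the argument above makes routine.
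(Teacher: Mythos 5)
Your proposal is correct and follows essentially the same route as the paper: the correspondence $\Gamma$, Lemma \ref{lemma1}, the decomposition $(i\times\id)^*\Gamma=\Delta+Y$ from Proposition \ref{prop1}, and then the localization exact sequence combined with induction on $m$ via $i_{m,n}\circ\iota=i_{m-1,n}$. The only (harmless) difference is your base case, where you use the degree of the zero-cycle $[p,\dots,p]$ rather than the paper's argument with principal divisors on curves inside $X$; both are valid.
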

\begin{proof}
We prove this by induction. First $\Sym^0 X$ is a single point and the morphism $i_{0,n}=(p,\cdots,p)$. To show that $i_{0,n}$ is injective we use the definition of the rational equivalence. First we show that the inclusion of $\Sym^0 X$ into $X$ gives an injection at the level of Chow groups. Suppose that some multiple of $p$ is rationally equivalent to zero on $X$. Then there exists a irreducible curve $C$ inside $X$ and a non-zero rational function $f$ in $k(C)$ such that
 $$
 np=div(f)\;.
 $$
But by the Collino's theorem for smooth projective curves it follows that $n=0$ and hence we have, the push-forward induced by this morphism $i_{0,n}$ is injective. Assume now that $i_*$ is injective for $m-1$ and any $n$ greater than or equal to $m-1$. Then consider the following commutative diagram

$$
  \xymatrix{
   \CH^*(\Sym^{m-1} X) \ar[r]^-{i_{m-1,m*}} \ar[dd]_-{}
  &   \CH^*(\Sym^m X) \ar[r]^-{\rho^{*}} \ar[dd]_-{i_{mn*}}
  & \CH^*(X_0(m))  \ar[dd]_-{}  \
  \\ \\
  \CH^*(\Sym^{m-1} X) \ar[r]^-{i_{m-1,n*}}
    & \CH^*(\Sym^n X) \ar[r]^-{}
  & \CH^*({(\Sym^{m-1}X)^c})
  }
$$
In the above $(\Sym^{m-1}X)^c$ is the complement of $\Sym^{m-1}X$ in $\Sym^n X$.
In this diagram the left part of the two rows are exact by the induction hypothesis and the middle part is exact by the localization exact sequence for  Chow groups.
Now suppose that $z$ belongs to $\CH^*(\Sym^m X)$, such that $$i_{m,n*}(z)=0$$
and let $Z$ be the cycle such that the cycle class of $Z$ is $z$. Let $cl(Z)$ denote the cycle class in the  Chow group, corresponding to the algebraic cycle $Z$.

Then we have
$$cl(\rho^{*}g_*i_*(Z))=0$$
which means by the theorem \ref{theorem1}
$$cl(\rho^{*}(Z))=0\;,$$
hence
$$\rho^{*}(cl(Z))=\rho^{*}(z)=0\;.$$
$$$$
So by the localization exact sequence there exists $z'$ in $\CH^*(\Sym^{m-1}X)$, such that
$$z=i_{m-1,m*}(z')\;.$$
By the commutativity of the left square of the above commutative diagram we get that
$$i_{m-1,n*}(z')=0\;.$$
By the injectivity of $i_{m-1,n*}$ we get that $z'=0$, so $z=0$, hence $i_{m,n*}$ is injective.
\end{proof}

\subsection{Collino's theorem on products of symmetric powers}
\label{subsection2}
Let $X,Y$ be  smooth projective varieties over an algebraically closed field $k$. Then let us fix two points $p,q$ on $X,Y$ respectively. Consider the map
$$([x_1,\cdots,x_{m_1}],[y_1,\cdots,y_{m_2}])\mapsto ([x_1,\cdots,x_{m_1},p,\cdots,p],[y_1,\cdots,y_{m_2},q,\cdots,q])$$
from $\Sym^{m_1} X\times \Sym^{m_2}Y$ to $\Sym^{n_1} X\times \Sym^{n_2} Y$. Denote this map as $i_{m_1,m_2}^{n_1,n_2}$ as previous. Then we prove that $i_{m_1,m_2*}^{n_1,n_2}$ is injective from $\CH_k(\Sym^{m_1} X\times \Sym^{m_2} Y)$ to $\CH_k(\Sym^{n_1} X\times \Sym^{n_2} Y)$. The arguments do not differ much from the previous arguments, only difficult part is to prove the analog of \ref{prop1}. First we define the correspondence $\Gamma$ to be
$$
(\pi_{n_1}^X\times \pi_{n_2}^{Y})\times(\pi_{m_1}^X\times \pi_{m_2}^{Y})(\Gamma')
$$
where $\Gamma'$ is the graph of the projection $pr$ from $X^{n_1}\times Y^{n_2}$ to $X^{m_1}\times Y^{m_2}$. Consider the homomorphism $g$ induced by $\Gamma$. Then we have the following lemma which is the analog of \ref{lemma1}.
\begin{lemma}
\label{lemma2}
The homomorphism $g_{*}\circ i_{m_1,m_2*}^{n_1,n_2}$ at the level of the Chow group with rational co-efficients, is induced by the cycle $(i_{m_1,m_2}^{n_1,n_2}\times id)^*\Gamma$ on $(\Sym^{m_1} X\times \Sym^{m_2} Y)\times(\Sym^{m_1} X\times \Sym^{m_2} Y)$.
\end{lemma}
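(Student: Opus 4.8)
The plan is to follow the proof of Lemma~\ref{lemma1} essentially verbatim, regarding the product $\Sym^{m_1}X\times\Sym^{m_2}Y$ as a single base variety playing the role of $\Sym^m X$ there, and $\Sym^{n_1}X\times\Sym^{n_2}Y$ the role of $\Sym^n X$. To lighten the notation I write $i=i_{m_1,m_2}^{n_1,n_2}$ for the given closed embedding, $P=\Sym^{m_1}X\times\Sym^{m_2}Y$ for its source and $Q=\Sym^{n_1}X\times\Sym^{n_2}Y$ for its target, so that $\Gamma$ is a cycle on $Q\times P$ and $i\colon P\hookrightarrow Q$.

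For a cycle $Z$ on $P$ I would unwind the action of the correspondence exactly as before:
$$g_*i_*(Z)=pr_{P*}\bigl((i_*(Z)\times P).\Gamma\bigr),$$
where $pr_P$ denotes the projection of $Q\times P$ onto its second factor. Writing $i_*(Z)\times P=(i\times\id)_*(Z\times P)$ and applying the projection formula to the proper morphism $i\times\id\colon P\times P\to Q\times P$ gives
$$g_*i_*(Z)=pr_{P*}\circ(i\times\id)_*\bigl((Z\times P).(i\times\id)^*\Gamma\bigr).$$
Since $pr_P\circ(i\times\id)=pr_P$ as morphisms $P\times P\to P$, the outer pushforward $(i\times\id)_*$ is absorbed, and one is left with $g_*i_*(Z)=pr_{P*}\bigl((Z\times P).(i\times\id)^*\Gamma\bigr)$, which says precisely that $g_*\circ i_{m_1,m_2*}^{n_1,n_2}$ is induced by $(i\times\id)^*\Gamma$ on $P\times P$.

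The only point requiring genuine care — and the only place where the argument diverges from the single-factor case — is that $P$ and $Q$ are products of symmetric powers, hence possibly singular quotient varieties, so the intersection products and the projection formula above are not the textbook ones for smooth varieties. To justify them I would pass to the smooth covers $X^{m_1}\times Y^{m_2}$ and $X^{n_1}\times Y^{n_2}$, equivariant under the product groups $S_{m_1}\times S_{m_2}$ and $S_{n_1}\times S_{n_2}$, and invoke the identification $\CH_*(V/G)=\CH_*(V)^G$ with $\QQ$-coefficients from \cite{Fulton}[Example 1.7.6]. Each operation appearing above (the exterior product, the intersection with $\Gamma$, the proper pushforwards, the refined Gysin pullback $(i\times\id)^*$, and the projection formula) lifts to these smooth $G$-varieties, where it is standard, and then descends to the invariant subgroups; the extra bookkeeping relative to Lemma~\ref{lemma1} is simply that one now tracks a product of two symmetric groups instead of one. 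With this in place the chain of equalities is routine.
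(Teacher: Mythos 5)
Your proposal is correct and matches the paper's intent exactly: the paper's own proof of this lemma is literally the one-line remark that the argument of Lemma~\ref{lemma1} goes through with $\Sym^m X$ replaced by $\Sym^{m_1}X\times\Sym^{m_2}Y$, which is precisely the substitution you carry out. Your additional remarks on descending the intersection-theoretic operations from the smooth covers via $\CH_*(V/G)=\CH_*(V)^G$ only make explicit a point the paper itself defers to the later propositions.
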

\begin{proof}
The proof is same as in the proof of lemma \ref{lemma1}, replacing $\Sym^m X$ by $\Sym^{m_1} X\times \Sym^{m_2} Y$.
\end{proof}
Let $X_0(m)\times Y_0(m)$ be the complement of
$$\Sym^{m_1-1}X\times \Sym^{m_2}Y\cup \Sym^{m_1} X\times \Sym^{m_2-1}Y$$ inside $\Sym^{m_1} X\times \Sym^{m_2} Y$. Let $\rho$ denote the embedding of $X_0(m)\times Y_0(m)$. Then we have the following.
\begin{proposition}
\label{prop2}
Let $\rho^*$ be the pull-back homomorphism from $\CH_*(\Sym^{m_1} X\times \Sym^{m_2} Y)$ to $\CH_*(X_0(m)\times Y_0(m))$. Then we have
$$\rho^*\circ g_*\circ i_*=\rho^*\;.$$
Here $i$ denotes $i_{m_1,m_2}^{n_1,n_2}$.
\end{proposition}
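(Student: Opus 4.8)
The plan is to follow the proof of Proposition \ref{prop1} verbatim, the only genuine work being the set-theoretic and then cycle-theoretic description of $(i\times\id)^*\Gamma$ on $(\Sym^{m_1}X\times\Sym^{m_2}Y)\times(\Sym^{m_1}X\times\Sym^{m_2}Y)$, where $i=i_{m_1,m_2}^{n_1,n_2}$. Write a point of the source as $([x_1',\dots,x_{m_1}'],[y_1',\dots,y_{m_2}'])$. Unwinding the definitions of $\Gamma$ and of $i$ exactly as in Proposition \ref{prop1}, one finds that $(i\times\id)^{-1}\Gamma$ is the set of pairs whose second coordinate $([u_1,\dots,u_{m_1}],[v_1,\dots,v_{m_2}])$ is obtained by choosing a size-$m_1$ sub-multiset $[u]$ of $\{x_1',\dots,x_{m_1}',p,\dots,p\}$ (with $n_1-m_1$ copies of $p$) together with a size-$m_2$ sub-multiset $[v]$ of $\{y_1',\dots,y_{m_2}',q,\dots,q\}$ (with $n_2-m_2$ copies of $q$).

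First I would isolate the diagonal. The choice $[u]=[x']$, $[v]=[y']$ is always available and yields exactly the diagonal $\Delta$ in $(\Sym^{m_1}X\times\Sym^{m_2}Y)^{2}$. Every other admissible choice either has $[u]$ containing one of the adjoined copies of $p$, forcing $[u]$ into the image of $\Sym^{m_1-1}X$ (the closed locus of multisets containing $p$), or has $[v]$ containing an adjoined $q$, forcing $[v]$ into the image of $\Sym^{m_2-1}Y$. Hence the remaining locus $D$ lies in $(\Sym^{m_1}X\times\Sym^{m_2}Y)\times Z_0$, where $Z_0=(\Sym^{m_1-1}X\times\Sym^{m_2}Y)\cup(\Sym^{m_1}X\times\Sym^{m_2-1}Y)$ is precisely the complement of $X_0(m)\times Y_0(m)$ in the second factor. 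This gives the set-theoretic identity $(i\times\id)^{-1}\Gamma=\Delta\cup D$, the analog of the decomposition in Proposition \ref{prop1}, the only new feature being that the exceptional stratum $Z_0$ is now a union of two boundary pieces rather than one.

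Passing to cycles, I would argue as in \cite{Collino}[Proposition 1] that $\Delta$ occurs with multiplicity one: over the dense open locus where the $x_i'$ avoid $p$ and the $y_j'$ avoid $q$, the only size-$m_1$ sub-multiset of $\{x',p^{\times(n_1-m_1)}\}$ equal to $[x']$ is $[x']$ itself (and similarly for $Y$), so the correspondence is generically the identity along $\Delta$. Thus $(i\times\id)^*\Gamma=\Delta+Y$ as algebraic cycles, with $Y$ supported on $(\Sym^{m_1}X\times\Sym^{m_2}Y)\times Z_0$. The intersection products are taken on the singular symmetric powers with $\QQ$-coefficients via the identification $\CH_*(X/G)=\CH_*(X)^G$ of \cite{Fulton}[Example 1.7.6], exactly as before.

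Finally, by Lemma \ref{lemma2} the composite $g_*\circ i_*$ is induced by this cycle, so for a cycle $Z$ on $\Sym^{m_1}X\times\Sym^{m_2}Y$,
$$g_*i_*(Z)=\pr_*\big[(\Delta+Y).(Z\times(\Sym^{m_1}X\times\Sym^{m_2}Y))\big]=Z+Z_1,$$
where $Z_1$, coming from the $Y$-term, is supported on $Z_0$. Since $\rho$ is the embedding of the complement of $Z_0$, we get $\rho^*(Z_1)=0$, hence $\rho^*g_*i_*(Z)=\rho^*(Z)$, which is the claim. I expect the multiplicity-one statement for $\Delta$, together with the care needed when $p$ (resp. $q$) happens to coincide with some $x_i'$ (resp. $y_j'$)---a codimension-$\geq 1$ phenomenon that does not affect the cycle identity---to be the only delicate point; the extra boundary stratum in $Z_0$ adds bookkeeping but no new difficulty, since each non-diagonal choice lands in one of the two pieces of $Z_0$.
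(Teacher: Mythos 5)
Your proposal is correct and follows essentially the same route as the paper: the same decomposition $(i\times\id)^{-1}\Gamma=\Delta\cup D$ with $D$ supported over the two boundary strata $\Sym^{m_1-1}X\times\Sym^{m_2}Y$ and $\Sym^{m_1}X\times\Sym^{m_2-1}Y$, the same appeal to \cite{Collino} for the cycle identity $(i\times\id)^*\Gamma=\Delta+Y$, and the same use of \cite{Fulton}[Example 1.7.6] to handle intersection products on the quotient varieties. Your explicit multiplicity-one check along $\Delta$ is a welcome addition but does not change the argument.
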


\begin{proof}
First we prove as in \ref{prop1}
$$(i\times \id)^{-1}(\Gamma)=\Delta\cup D$$
where $\Delta$ is the diagonal in $(\Sym^{m_1} X\times \Sym^{m_2} Y)\times (\Sym^{m_1} X\times \Sym^{m_2} Y)$. $D$ is supported on
$$(\Sym^{m_1-1}X\times \Sym^{m_2} Y)\cup (\Sym^{m_1} X\times \Sym^{m_2-1}Y)\;.$$
$$(i\times id)^{-1}(\Gamma)=\{(([x_1,\cdots,x_{n_1}],[y_1,\cdots,y_{n_2}]),
([x'_1,\cdots,x'_{m_1}],[y'_1,\cdots,y'_{m_2}])$$
$$|x_i,x_i'\in X,y_i,y_i'\in Y\}$$
Also we have
$$x_i'=\sigma(x_i)\quad y_i'=\tau(y_i)$$
for some $\sigma,\tau$ in the group of permutations of $n$-letters.
Call the support of $\Gamma$ to be
as $B$ and $(i\times id)^{-1}(\Gamma)=A$. Then $A$ consists of pairs of pairs of unordered tuples like
$$(([x_1,\cdots,x_{m_1}],[y_1,\cdots,y_{m_2}]),$$
$$([x_1',\cdots,x_{m_1}'],[y_1',\cdots,y_{m_2}']))
$$
such that
$$
(([x_1,\cdots,x_{m_1},p,\cdots,p],[y_1,\cdots,y_{m_2},\cdots,q]),
([x_1',\cdots,x_{m_1}'],[y_1',\cdots,y_{m_2}']))
$$
is in $B$. So the elements of $A$ are either of the form
$$(([x_1,\cdots,x_{m_1}],[y_1,\cdots,y_{m_2}])
([x_1,\cdots,x_{m_1}],[y_1,\cdots,y_{m_2}]))$$
or of the form
$$(([x_1,\cdots,x_{m_1}]),[y_1,\cdots,y_{m_2}]),([p,\cdots,x_{m_1}],
[y_1,\cdots,y_{m_2}]))$$
or of the form
$$(([x_1,\cdots,x_{m_1}],[y_1,\cdots,y_{m_2}]),([x_1,\cdots,x_{m_1}],
[q,\cdots,y_{m_2}]))\;.$$
So we can write
$$(i\times id)^{-1}(\Gamma)=\Delta\cup D$$
where $\Delta$ is the diagonal of $(\Sym^{m_1} X\times \Sym^{m_2} Y)\times(\Sym^{m_1} X\times \Sym^{m_2} Y)$ and $D$ is supported on
$$(\Sym^{m_1} X\times \Sym^{m_1} Y)\times (\Sym^{m_1-1}X\times \Sym^{m_2} Y)$$
$$\cup (\Sym^{m_1} X\times \Sym^{m_2} Y)\times (\Sym^{m_1}X\times \Sym^{m_2-1} Y)\;.$$
Then we have $g_*i_*$ is equal to
$$\pr_{(\Sym^{m_1} X\times \Sym^{m_2} Y*)}((\Delta+Y_1).(Z\times Sym^{m_1} X\times \Sym^{m_2} Y))$$
where $Y_1$ has support $D$ (it follows that the cycle $(i\times id)^*\Gamma=\Delta+Y_1$ as cycles by arguing as in proposition $1$ in \cite{Collino}). Here again we have to take care about the Chow moving lemma, which is true on $\Sym^{m_1}X\times \Sym^{m_2}Y$, because the variety is a quotient of $X^{m_1}\times Y^{m_2}$ by the group $S_{m_1}\times S_{m_2}$, where $S_i$ denote the symmetric group on $i$-letters. So the above can be written as
$$Z+Z_1$$
where the support of $Z_1$ is contained in
$$(\Sym^{m_1-1}X\times \Sym^{m_2} Y)\cup(\Sym^{m_1}X\times \Sym^{m_2-1}Y)\;.$$
Therefore
$$\rho^*(Z+Z_1)=\rho^*(Z)$$
since $\rho^*(Z_1)$ is zero. So we have the proposition.

\end{proof}

Now we prove that the push-forward homomorphism $i_{m_1,m_2*}^{n_1,n_2}$ is injective from $\CH_k(\Sym^{m_1} X\times \Sym^{m_2} Y)$ to $\CH_k(\Sym^{n_1} X\times \Sym^{n_2} Y)$. Denote the closed embedding of $\Sym^{m_1-1}X\times \Sym^{m_2}Y\cup \Sym^{m_1}X\times \Sym^{m_2-1}Y$ into $\Sym^{m_1}X\times \Sym^{m_2}Y$  as $j$ and that into $\Sym^{n_1}X\times \Sym^{n_2}Y$ as $j'$.
\begin{theorem}
\label{theorem3}
The push-forward homomorphism $i_{m_1,m_2*}^{n_1,n_2}$ from $\CH_*(\Sym^{m_1} X\times \Sym^{m_2} Y)$ to $\CH_*(\Sym^{n_1} X\times \Sym^{n_2} Y)$ is injective.
\end{theorem}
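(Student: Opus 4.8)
The plan is to reduce the statement to the one–variable Theorem \ref{theorem1}. Although it is tempting to mirror the proof of Theorem \ref{theorem1} directly — running a double induction on $(m_1,m_2)$ and using Proposition \ref{prop2} in place of Proposition \ref{prop1} — this runs into a genuine difficulty, described below, which motivates a cleaner route. What I would actually do is factor the embedding as
$$i_{m_1,m_2}^{n_1,n_2}=\bigl(i_{m_1,n_1}^{X}\times \id_{\Sym^{n_2}Y}\bigr)\circ\bigl(\id_{\Sym^{m_1}X}\times i_{m_2,n_2}^{Y}\bigr),$$
so that it suffices to show that each factor induces an injective push-forward. Each factor is a one–variable Collino embedding (in $Y$, then in $X$) with a passive smooth projective factor $W$ (first $W=\Sym^{m_1}X$, then $W=\Sym^{n_2}Y$), and a composite of injective maps is injective.

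Thus the crux is the relative statement: for a smooth projective $W$, the push-forward $(i_{m,n}^{X}\times\id_W)_*$ from $\CH_*(\Sym^m X\times W)$ to $\CH_*(\Sym^n X\times W)$ is injective. I would prove this by repeating the proof of Theorem \ref{theorem1} with every space crossed with $W$: replace the correspondence $\Gamma$ by $\Gamma\times\Delta_W$ (after reordering factors), so that Lemma \ref{lemma1} and Proposition \ref{prop1} carry over with the identity on the $W$-factor tagging along, giving
$$(i_{m,n}^{X}\times \id_W)^*(\Gamma\times\Delta_W)=\Delta+(E\times\Delta_W),$$
where $E$ is the error cycle of Proposition \ref{prop1}; the error term is now supported over the single boundary piece $\Sym^{m-1}X\times W$. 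The induction on $m$ is then word-for-word that of Theorem \ref{theorem1}, with base case $m=0$ handled by noting that $\{pt\}\times W\hookrightarrow \Sym^n X\times W$ has the projection $\pr_W$ as a retraction, so $(\pr_W)_*(i_{0,n}^X\times\id_W)_*=\id$ and the push-forward is injective. The one point to check is that intersection products and the moving lemma are available on $\Sym^m X\times W$; since in our application $W=\Sym^{m_1}X$ or $\Sym^{n_2}Y$ is a finite quotient of a smooth projective variety, this holds with $\QQ$-coefficients through the identification $\CH_*(V/G)=\CH_*(V)^G$ already used in Proposition \ref{prop1}.

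The step I expect to be the real obstacle is exactly the one that the factorization avoids. In the direct double induction one would take $z$ with $i_{m_1,m_2*}^{n_1,n_2}(z)=0$, use Proposition \ref{prop2} to get $\rho^*(z)=0$, and conclude from the localization sequence that $z$ is pushed forward from the boundary $Z=(\Sym^{m_1-1}X\times\Sym^{m_2}Y)\cup(\Sym^{m_1}X\times\Sym^{m_2-1}Y)$. Writing $z=(k_1)_*w_1+(k_2)_*w_2$ for the two components $Z_1,Z_2$ and using $i_{m_1,m_2}^{n_1,n_2}\circ k_1=i_{m_1-1,m_2}^{n_1,n_2}$, $i_{m_1,m_2}^{n_1,n_2}\circ k_2=i_{m_1,m_2-1}^{n_1,n_2}$, applying $i_*$ gives
$$(i_{m_1-1,m_2}^{n_1,n_2})_*w_1+(i_{m_1,m_2-1}^{n_1,n_2})_*w_2=0.$$
Each summand is injective by the inductive hypothesis, but their images in $\CH_*(\Sym^{n_1}X\times\Sym^{n_2}Y)$ overlap (both contain classes supported where $p$ and $q$ occur with high multiplicity), so one cannot separate the two contributions to force $z=0$. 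Because the boundary $Z$ is reducible, the clean one–piece bookkeeping of Theorem \ref{theorem1} breaks down here; factoring $i_{m_1,m_2}^{n_1,n_2}$ so that only one symmetric power varies at a time keeps the boundary irreducible and removes the difficulty, and composing the two injective relative push-forwards of the previous paragraph completes the proof.
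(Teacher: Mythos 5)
Your proof is correct, but it takes a genuinely different route from the paper's. The paper runs the double induction directly: it uses Proposition \ref{prop2} and localization to push a kernel class $z$ into the reducible boundary $A=(\Sym^{m_1-1}X\times \Sym^{m_2}Y)\cup(\Sym^{m_1}X\times \Sym^{m_2-1}Y)$, and then confronts exactly the overlap problem you identify by invoking the right-exact sequence $\CH_*(A_1\cap A_2)\to \CH_*(A_1)\oplus \CH_*(A_2)\to \CH_*(A)\to 0$, lifting $z'$ to a pair $(w_1,w_2)$ and asserting that the kernel of $\CH_*(A_1)\oplus\CH_*(A_2)\to\CH_*(\Sym^{n_1}X\times\Sym^{n_2}Y)$ is precisely the antidiagonal image of $\CH_*(A_1\cap A_2)=\CH_*(\Sym^{m_1-1}X\times\Sym^{m_2-1}Y)$, which forces $z'=0$ in $\CH_*(A)$ and hence $z=0$; that key assertion is only sketched there (``by arguing similarly as above using the technique of producing a correspondence''). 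Your factorization $i_{m_1,m_2}^{n_1,n_2}=(i_{m_1,n_1}^{X}\times\id)\circ(\id\times i_{m_2,n_2}^{Y})$ sidesteps this entirely: each factor is a one-variable Collino embedding with a passive factor $W$, the boundary $\Sym^{m-1}X\times W$ stays irreducible so the bookkeeping of Theorem \ref{theorem1} transfers verbatim, the correspondence $\Gamma\times\Delta_W$ and the retraction $\pr_W$ for the base case work as you say, and the needed $\QQ$-coefficient intersection theory on $\Sym^mX\times W$ is available via $\CH_*(V/G)=\CH_*(V)^G$ exactly as in Proposition \ref{prop1}. The trade-off is that your argument is cleaner and avoids the step the paper leaves incomplete, whereas the paper's route, if carried out in full, would also deliver the auxiliary statement about push-forward from the union of the two boundary strata; your relative lemma ($(i_{m,n}^X\times\id_W)_*$ injective for any finite-quotient $W$) is itself a reusable strengthening, so nothing of substance is lost, and your diagnosis of why the naive double induction stalls is exactly the weak point of the printed proof.
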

\begin{proof}
The proof follows by mimicking the arguments of \ref{theorem1} with $\Sym^m X$ is replaced by $\Sym^{m_1} X\times \Sym^{m_2} Y$ and by using \ref{prop2}. The case when $m=0$ follows, arguing similarly as in \ref{theorem1}. Let us assume that $i_{k_1,k_2*}^{n_1,n_2}$ is injective for $k_1=0,\cdots,m_1-1$ or $k_2=0,\cdots,m_2-1$.

Here $g_*$ is the homomorphism $\Gamma$ defined by the correspondence $\Gamma$ mentioned in the beginning of this subsection. So let us consider the following commutative diagram. Let $A$ be the union of $\Sym^{m_1-1}X\times \Sym^{m_2}Y$ and $\Sym^{m_1}X\times \Sym^{m_2-1}Y$
$$
  \xymatrix{
   \CH^*(A) \ar[r]^-{j_{*}} \ar[dd]_-{}
  &   \CH^*(\Sym^{m_1} X\times \Sym^{m_2} Y) \ar[r]^-{\rho_0^{*}} \ar[dd]_-{i_{mn*}}
  & \CH^*(X_0(m_1)\times Y_0(m_2))  \ar[dd]_-{}  \
  \\ \\
   \CH^*(A) \ar[r]^-{j'_*}
    & \CH^*(\Sym^{n_1} X\times \Sym^{n_2} Y) \ar[r]^-{}
  & \CH^*(V)
  }
$$
Here $V$ is the complement of $\Sym^{m_1-1}X\times \Sym^{m_2}Y\cup \Sym^{m_1}X\times \Sym^{m_2-1}Y$ in $\Sym^{n_1}X\times \Sym^{n_2} Y$. The map $\rho$ denote the inclusion of $U$ into $\Sym^{m_1} X\times \Sym^{m_2} Y$.
Let $z$ belongs to kernel of $i_{m_1,m_2*}^{n_1,n_2}$, denote it by $i$ that is
$$i_{*}(z)=0\;.$$
Then by composing with $\rho^*$ coming from \ref{prop2} and $g_*$ we get that
$$\rho^*g_*(i_{*}(z))=0$$
but the above is nothing but
$$\rho^*(z)=0$$
by \ref{prop2}.

Therefore by the localisation exact sequence present in first row of the previous diagram we get that there exists $z'$ in $\CH^*(\Sym^{m_1-1}X\times \Sym^{m_2}Y\cup \Sym^{m_1}X\times \Sym^{m_2-1}Y)$ such that
$$z=j_{*}(z')$$
by the commutativity of the previous rectangle it follows that
$$j'_{*}(z')=0\;.$$
Now note that there is an exact sequence of Chow groups as follows.
Let $A=A_1\cup A_2$ be the union of irreducible components of $A$, then we have
$$\CH_*(A_1\cap A_2)\to \CH_*(A_1)\oplus \CH_*(A_2)\to \CH_*(A)\to 0\;.$$
Applying this in our situation when $A_1=\Sym^{m_1-1}X\times \Sym^{m_2}Y$ and $A_2=\Sym^{m_1}X\times \Sym^{m_2-1}Y$ and let $A$ be their union, we get that there exist $z''$ such that $f(z'')=z'$, where $f$ is the homomorphism from $\CH_*(A_1)\oplus \CH_*(A_2)$ to $\CH_*(A)$, composing this with $j'_{*}$ we get that
$$j'_*(f(z''))=0$$
but $j'_*f$ is nothing but the homomorphism
$$\CH_*(A_1)\oplus \CH_*(A_2)\to \CH_*(\Sym^{n_1}X\times \Sym^{n_2}Y)$$
we prove that $z''$ is actually in the kernel of the above homomorphism and the kernel is $\CH_*(A_1\cap A_2)$, therefore it is zero by the induction hypothesis because $A_1\cap A_2=\Sym^{m_1-1}X\times \Sym^{m_2-1}Y$ . This is done by showing that $\CH_*(A_1\cup A_2)$ to $\CH_*(\Sym^{n_1}X\times \Sym^{n_2}Y)$ is injective. This can be achieved by arguing similarly as above using the technique of producing a correspondence on an appropriate variety and considering the push-forward induced by that. The Chow moving lemma is taken care of because we are working with varieties which are union of quotients of a smooth projective variety by a finite group.

\end{proof}
\subsection{Kernel of the push-forward homomorphism from  $\CH_*(\Sym^k X\times \Sym^k X)$ to $\CH_*(\Sym^{2k+n}X)$}
Let $p$ be a fixed point on a smooth projective variety $X$. Consider the morphism $i$ from $\Sym^k X\times \Sym^k X$ to $\Sym^{2k+n}X$ given by $$([x_1,\cdots,x_k],[y_1,\cdots,y_k])\mapsto [x_1,\cdots,x_k,y_1,\cdots,y_k,p,\cdots,p]\;.$$
We want to prove that the push-forward homomorphism induced by this regular morphism is injective at the level of Chow groups. Let us consider the projection morphism $pr$ from $X^{2k+n}$ to $X^{2k}\cong X^k\times X^k$. Let $\pi_i$ be the quotient morphism from $X^i$ to $\Sym^i X$. Then consider the correspondence
$$\Gamma=\pi_{2k+n}\times \pi_k\times \pi_k(Graph(pr))$$
which is supported on $\Sym^{2k+n}X\times \Sym^k X\times \Sym^k X$.
Let $g_*$ denote the homomorphism at the level of Chow groups induced by the correspondence $\Gamma$.

\begin{lemma}
\label{lemma3}
The homomorphism $g_{*}\circ i_{*}$ at the level of Chow group with rational coefficients, is induced by the cycle $(i\times id)^*\Gamma$ on
$\Sym^k X\times \Sym^k X\times \Sym^k X\times \Sym^k X$.
\end{lemma}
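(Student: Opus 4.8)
The plan is to reproduce verbatim the correspondence calculation of Lemma \ref{lemma1}, with the single symmetric power $\Sym^m X$ replaced throughout by the product $W := \Sym^k X\times \Sym^k X$. Here $W$ is simultaneously the source of $i$ and the target of $g_*$, so the composite $g_*\circ i_*$ is an endomorphism of $\CH_*(W)$, and the cycle $(i\times \id)^*\Gamma$ lives on $W\times W=\Sym^k X\times \Sym^k X\times \Sym^k X\times \Sym^k X$, as claimed. I would begin by unwinding the definition of the correspondence action: for a cycle $Z$ on $W$ one has $g_*i_*(Z)=\pr_{W*}\!\big((i_*(Z)\times W)\cdot \Gamma\big)$, where this projection is taken on $\Sym^{2k+n}X\times W$.

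Next I would push the embedding inside the exterior product, using $i_*(Z)\times W=(i\times \id)_*(Z\times W)$, so that $g_*i_*(Z)=\pr_{W*}\big((i\times \id)_*(Z\times W)\cdot \Gamma\big)$. The key step is then the projection formula applied to the proper morphism $i\times \id\colon W\times W\to \Sym^{2k+n}X\times W$, which rewrites this as $\pr_{W*}\circ(i\times \id)_*\big((Z\times W)\cdot (i\times \id)^*\Gamma\big)$. Finally, since $\pr_{W}\circ (i\times \id)$ equals the projection $\pr_{W}$ on $W\times W$, the two pushforwards collapse and I obtain $g_*i_*(Z)=\pr_{W*}\big((Z\times W)\cdot (i\times \id)^*\Gamma\big)$, with the projection now taken on $W\times W$. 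This is precisely the action on $Z$ of the correspondence determined by $(i\times \id)^*\Gamma$, which proves the lemma.

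The only point requiring genuine care — and hence the main obstacle — is that $W=\Sym^k X\times \Sym^k X$ is not smooth in general, so the intersection products $(i_*(Z)\times W)\cdot \Gamma$ and $(Z\times W)\cdot (i\times \id)^*\Gamma$, as well as the pullback $(i\times \id)^*\Gamma$, must be given meaning. As in Proposition \ref{prop1} and Theorem \ref{theorem3}, this is handled by working with $\QQ$-coefficients and invoking the identification $\CH_*(V/G)=\CH_*(V)^G$ of \cite{Fulton}[Example 1.7.6]: here $W$ is the quotient of the smooth projective variety $X^{k}\times X^{k}$ by the finite group $S_k\times S_k$, so the intersection theory and the Chow moving lemma descend from the smooth cover, and the operations above are all well defined. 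The projection formula I use is the standard one for proper morphisms (Fulton, Prop.~8.1.1 / 8.3), and properness is automatic since every variety in sight is projective; so no extra hypotheses are needed beyond smoothness of $X$.
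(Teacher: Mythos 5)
Your proposal is correct and follows exactly the route the paper intends: the paper's own proof of this lemma is simply the statement that one repeats the proof of Lemma \ref{lemma1} with $\Sym^m X$ replaced by $\Sym^k X\times\Sym^k X$ and applies the projection formula, which is precisely the computation you carry out (and your added remark on handling the intersection products on the quotient $W=(X^k\times X^k)/(S_k\times S_k)$ via $\CH_*(V/G)=\CH_*(V)^G$ with $\QQ$-coefficients is the same device the paper invokes elsewhere). No gaps; your write-up is in fact more explicit than the paper's.
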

\begin{proof}
The proof is same as in the proof of lemma \ref{lemma1}, replacing $\Sym^m X$ by $\Sym^k X\times \Sym^k X$ and by using the projection formula.
\end{proof}

Let $X_0(k)$ be the complement of
$$\Sym^{k-1}X\times \Sym^{k}X\cup \Sym^{k} X\times \Sym^{k-1}X$$ inside $\Sym^{k} X\times \Sym^{k} X$. Let $\rho$ denote the embedding of $X_0(k)$ into $\Sym^k X\times \Sym^k X$. Then we have the following.
\begin{proposition}
\label{prop3}
Let $\rho^*$ be the pull-back homomorphism from $\CH_*(\Sym^{k} X\times \Sym^{k} X)$ to $\CH_*(X_0(k))$. Then we have
$$\rho^*\circ g_*\circ i_*(Z)=\rho^*(dZ+\sum_i d_iZ_i)\;,$$
where $Z_i$ is the algebraic subset
$$\{([y_1,\cdots,y_i,x_{i+1},\cdots,x_k],
[x_1,\cdots,x_i,y_{i+1},\cdots,y_k]:([x_1,\cdots,x_k],[y_1,\cdots,y_k])\in Z\}\;.$$

\end{proposition}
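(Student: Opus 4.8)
The plan is to follow the template of Propositions \ref{prop1} and \ref{prop2}: use Lemma \ref{lemma3} to replace $g_*\circ i_*$ by the action of the cycle $(i\times \id)^*\Gamma$ on $(\Sym^k X\times \Sym^k X)\times(\Sym^k X\times \Sym^k X)$, and then compute this cycle explicitly by first analysing the set-theoretic preimage $(i\times \id)^{-1}\Gamma$ and afterwards restricting the target factor to the open locus cut out by $\rho^*$. Throughout, the formula
$$g_*i_*(Z)=\pr_{*}\big((i\times \id)^*\Gamma\cdot (Z\times \Sym^k X\times \Sym^k X)\big)$$
is the object to be evaluated, exactly as in the proof of Proposition \ref{prop1}.

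First I would describe the support $(i\times \id)^{-1}\Gamma$ set-theoretically. A pair
$$\big(([x_1,\dots,x_k],[y_1,\dots,y_k]),([a_1,\dots,a_k],[b_1,\dots,b_k])\big)$$
lies in it precisely when $[a_1,\dots,a_k]$ and $[b_1,\dots,b_k]$ together form a sub-multiset of size $2k$ of the multiset $\{x_1,\dots,x_k,y_1,\dots,y_k,p,\dots,p\}$ (with $n$ copies of $p$), the remaining $n$ entries being discarded by the projection $pr$. Thus the components of $(i\times \id)^{-1}\Gamma$ are indexed by the distribution of the $2k$ retained points among the $x$'s, the $y$'s and the copies of $p$, together with a splitting of those $2k$ points into the groups $[a]$ and $[b]$. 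Applying $\rho^*$ to the target factor, which is the inclusion of $X_0(k)$ into $\Sym^k X\times \Sym^k X$, forces both $[a_1,\dots,a_k]$ and $[b_1,\dots,b_k]$ to avoid $p$: any component retaining a copy of $p$ in either group lies in $\Sym^{k-1}X\times \Sym^k X\cup \Sym^k X\times \Sym^{k-1}X$ and is annihilated by $\rho^*$. For $Z$ generically supported where the $x_i,y_j$ are distinct and different from $p$, the surviving components are therefore exactly the $k+1$ ways of splitting $\{x_1,\dots,x_k,y_1,\dots,y_k\}$ into two groups of size $k$.

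Grouping these surviving components by the number $i$ of points exchanged between the two factors, I obtain the exchange correspondences $\Gamma_0,\dots,\Gamma_k$, so that as cycles
$$(i\times \id)^*\Gamma=\sum_{i=0}^{k} d_i\,\Gamma_i+W\;,$$
where $W$ is supported on the boundary $\Sym^{k-1}X\times \Sym^k X\cup \Sym^k X\times \Sym^{k-1}X$ of the target. Intersecting with $Z\times \Sym^k X\times \Sym^k X$ and pushing forward along $\pr$, the term $d_i\Gamma_i$ contributes $d_iZ_i$ (with $Z_0=Z$ and coefficient written $d$), while $W$ contributes a cycle supported on the boundary. Since $\rho^*$ kills the boundary contribution, this yields $\rho^*g_*i_*(Z)=\rho^*(dZ+\sum_i d_iZ_i)$, as claimed. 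The intersection product used here is legitimate by the same reasoning as in Propositions \ref{prop1} and \ref{prop2}: the ambient spaces are quotients of smooth projective varieties by finite groups, and $\CH_*(X/G)=\CH_*(X)^G$ by \cite{Fulton}[Example 1.7.6], so the Chow moving lemma is available.

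The main obstacle I expect is twofold. First, passing from the set-theoretic preimage to the genuine cycle $(i\times \id)^*\Gamma$ with correct multiplicities $d,d_i$: these are governed by the degrees of the quotient maps $\pi_{2k+n}$ and $\pi_k$ together with the combinatorial count of orderings realising each split, so the multiplicities must be read off carefully rather than asserted. Second, the exchange maps are defined on $X^k\times X^k$ and not directly on $\Sym^k X\times \Sym^k X$, since they depend on the ordering of the representatives; I must therefore express each $Z_i$ as the push-forward of the prefix exchange from the level of ordered tuples, and then invoke the $S_k\times S_k$-symmetry to see that all a priori distinct $i$-fold exchanges represent the single cycle $Z_i$. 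It is precisely this identification that packages the combinatorial count into the single coefficient $d_i$, and keeping it straight is the delicate point of the argument.
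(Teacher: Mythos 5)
Your proposal follows essentially the same route as the paper's own proof: both analyse the set-theoretic preimage $(i\times \id)^{-1}\Gamma$, decompose it into the diagonal, the exchange components $\Delta_i$, and a boundary piece supported on $\Sym^{k-1}X\times\Sym^k X\cup\Sym^k X\times\Sym^{k-1}X$, write $(i\times\id)^*\Gamma=d\Delta+\sum_i d_i\Delta_i+Y$ as cycles, and conclude by intersecting with $Z\times\Sym^k X\times\Sym^k X$, pushing forward, and letting $\rho^*$ kill the boundary contribution. Your closing remarks on the provenance of the multiplicities $d,d_i$ and on descending the prefix-exchange from ordered tuples via the $S_k\times S_k$-action are points the paper treats only implicitly, but they do not change the argument.
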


\begin{proof}
First we prove as in \ref{prop1}
$$(i\times \id)^{-1}(\Gamma)=\Delta\cup \Delta_i\cup D$$
where $\Delta$ is the diagonal in $(\Sym^{k} X\times \Sym^{k} X)\times (\Sym^{k} X\times \Sym^{k} X)$. $\Delta_i$ is the subset of the form
$$\{([x_1,\cdots,x_k],[y_1,\cdots,y_k]),([y_1,\cdots,y_i,x_{i+1},\cdots,x_k]
,[x_1,\cdots,x_i,y_{i+1},\cdots,y_k])\}$$
and $D$ is supported on
$$(\Sym^{k-1}X\times \Sym^{k} X)\cup (\Sym^{k} X\times \Sym^{k-1}X)\;.$$

Call the support of $\Gamma$ to be
as $B$ and $(i\times id)^{-1}(\Gamma)=A$. Then $A$ consists of pairs of pairs of unordered tuples like
$$(([x_1,\cdots,x_{k}],[y_1,\cdots,y_{k}]),([x_1',\cdots,x_{k}'],
[y_1',\cdots,y_{k}']))
$$
such that
$$
(([x_1,\cdots,x_{k},y_1,\cdots,y_{k},p,\cdots,p]),
([x_1',\cdots,x_{k}'],[y_1',\cdots,y_{k}']))
$$
is in $B$. So the elements of $A$ are either of the form
$$(([x_1,\cdots,x_{k}],[y_1,\cdots,y_{k}])
([x_1,\cdots,x_{k}],[y_1,\cdots,y_{k}]))$$
or of the form
$$(([x_1,\cdots,x_{k}]),[y_1,\cdots,y_{k}]),([p,\cdots,x_{k}],
[y_1,\cdots,y_{k}]))$$
or of the form
$$(([x_1,\cdots,x_{k}],[y_1,\cdots,y_{k}]),([x_1,\cdots,x_{k}],
[p,\cdots,y_{k}]))\;,$$
or of the form
$$(([x_1,\cdots,x_{k}],[y_1,\cdots,y_{k}]),([y_1,\cdots,y_i,\cdots,x_{k}],
[x_1,\cdots,x_i,\cdots,y_{k}]))\;,$$

So we can write
$$(i\times id)^{-1}(\Gamma)=\Delta\cup_i \Delta_i \cup D$$
where $\Delta$ is the diagonal of $(\Sym^{k} X\times \Sym^{k} X)\times(\Sym^{k} X\times \Sym^{k} X)$ and $D$ is supported on
$$(\Sym^{k} X\times \Sym^{k} X)\times (\Sym^{k-1}X\times \Sym^{k} X)\cup (\Sym^{k} X\times \Sym^{k} X)\times (\Sym^{k}X\times \Sym^{k-1} X)\;,$$
and $\Delta_i$'s are described as in the beginning. So the cycle
$$(i\times\id)^*(\Gamma)=d\Delta+\sum_i d_i\Delta_i+Y$$ where $Supp(Y)=D$.
Then we have $g_*i_*$ is equal to
$$\pr_{(\Sym^{k} X\times \Sym^{k} X*)}((d\Delta+\sum_i d_i \Delta_i+Y).(Z\times Sym^{k} X\times \Sym^{k} X))$$
where $Y$ has support $D$. So the above can be written as
$$dZ+\sum_i d_i Z_i + Z'$$
where the support of $Z'$ is contained in
$$(\Sym^{k-1}X\times \Sym^{k} X)\cup(\Sym^{k}X\times \Sym^{k-1}X)\;.$$
Therefore
$$\rho^*(dZ+\sum_i d_iZ_i+Z')=d\rho^*(Z)+\sum_i d_i\rho^*(Z_i)$$
since $\rho^*(Z')$ is zero, where $Z_i$'s are as described in the statement of the proposition. So we have the proposition.

\end{proof}

\begin{theorem}
\label{theorem4}
Let $Z$ be a cycle belonging to the kernel of $i_*$. Suppose that the natural push-forward homomorphism from $\CH_*(\Sym^{k-1}X\times \Sym^k X\cup\Sym^k X\times \Sym^{k-1}X)$ to $\CH_*(\Sym^{2k+n}X)$. Then $dZ$ is rationally equivalent to the cycle $-\sum_i d_iZ_i$, where $Z_i$'s are as described as in the previous proposition \ref{prop3}.
\end{theorem}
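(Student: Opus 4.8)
The plan is to run the same localization argument as in Theorem \ref{theorem1}, now fed by Proposition \ref{prop3}, the one genuinely new ingredient being a combinatorial identity that collapses every $Z_i$ onto $Z$ after push-forward by $i$. (I read the hypothesis as asserting that the natural push-forward $\CH_*(\Sym^{k-1}X\times\Sym^kX\cup\Sym^kX\times\Sym^{k-1}X)\to\CH_*(\Sym^{2k+n}X)$ is \emph{injective}.)

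First I would exploit that $Z$ lies in the kernel of $i_*$: then $i_*(Z)=0$, so $g_*\circ i_*(Z)=0$ and hence $\rho^*\circ g_*\circ i_*(Z)=0$. By Proposition \ref{prop3} this reads
$$\rho^*\Bigl(dZ+\sum_i d_iZ_i\Bigr)=0$$
in $\CH_*(X_0(k))$. Writing $\iota$ for the closed embedding of $A=\Sym^{k-1}X\times\Sym^kX\cup\Sym^kX\times\Sym^{k-1}X$ into $\Sym^kX\times\Sym^kX$, the localization exact sequence
$$\CH_*(A)\xrightarrow{\ \iota_*\ }\CH_*(\Sym^kX\times\Sym^kX)\xrightarrow{\ \rho^*\ }\CH_*(X_0(k))\longrightarrow 0$$
then yields a class $W\in\CH_*(A)$ with $\iota_*(W)=dZ+\sum_i d_iZ_i$ in $\CH_*(\Sym^kX\times\Sym^kX)$.

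The heart of the matter will be the identity $i_*(Z_i)=i_*(Z)$ for each $i$. Although the recipe $([x_1,\cdots,x_k],[y_1,\cdots,y_k])\mapsto([y_1,\cdots,y_i,x_{i+1},\cdots,x_k],[x_1,\cdots,x_i,y_{i+1},\cdots,y_k])$ does not descend to a morphism of symmetric powers (one cannot order the entries of an unordered tuple), it does define an honest morphism $\tilde\sigma_i\colon X^k\times X^k\to\Sym^kX\times\Sym^kX$ on ordered tuples, and $Z_i$ is the cycle it sweeps out from a lift of $Z$. Composing with $i$ and comparing multisets gives $i\circ\tilde\sigma_i=i\circ\pi$ as morphisms $X^k\times X^k\to\Sym^{2k+n}X$, where $\pi$ is the quotient map, so pushing forward a lift of $Z$ yields $i_*(Z_i)=i_*(Z)=0$. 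Hence
$$i_*\Bigl(dZ+\sum_i d_iZ_i\Bigr)=\Bigl(d+\sum_i d_i\Bigr)\,i_*(Z)=0.$$

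Finally I would note that $i\circ\iota$ is exactly the natural morphism $A\to\Sym^{2k+n}X$, so $i_*\circ\iota_*$ is the push-forward hypothesized to be injective; applying it to $W$ gives $i_*\circ\iota_*(W)=i_*(dZ+\sum_i d_iZ_i)=0$, whence $W=0$ in $\CH_*(A)$ and so $dZ+\sum_i d_iZ_i=\iota_*(W)=0$ in $\CH_*(\Sym^kX\times\Sym^kX)$ — that is, $dZ\sim-\sum_i d_iZ_i$. The one point requiring care is the cycle-level identity $i_*(Z_i)=i_*(Z)$ with multiplicities, which I would settle upstairs on $X^k\times X^k$ as above, using the identification $\CH_*(X^k\times X^k/G)=\CH_*(X^k\times X^k)^G$ to transfer the computation back down; the coefficients $d$ and $d_i$ from Proposition \ref{prop3} must likewise be tracked through the Chow moving lemma on these quotient varieties.
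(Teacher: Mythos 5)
Your proof follows the paper's argument essentially verbatim: apply $\rho^*\circ g_*$ to $i_*(Z)=0$, invoke Proposition \ref{prop3}, lift $dZ+\sum_i d_iZ_i$ through the localization sequence to a class on $A=\Sym^{k-1}X\times\Sym^kX\cup\Sym^kX\times\Sym^{k-1}X$, and kill that class using the hypothesized injectivity of the push-forward from $\CH_*(A)$ to $\CH_*(\Sym^{2k+n}X)$. The one place you go beyond the paper is in explicitly verifying $i_*(Z_i)=0$ via the identity $i\circ\tilde\sigma_i=i\circ\pi$ upstairs on $X^k\times X^k$ (with the attendant bookkeeping of degrees, harmless over $\QQ$); the paper needs exactly this fact to deduce $j'_*(Z')=0$ from the commutativity of the left square but never states it, so your check is a correct and welcome patch of a step the paper leaves implicit.
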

\begin{proof}

Here $g_*$ is the homomorphism $\Gamma$ defined by the correspondence $\Gamma$ mentioned in the beginning of this subsection. So let us consider the following commutative diagram. Let $A$ be the union of $\Sym^{k-1}X\times \Sym^{k}X$ and $\Sym^{k}X\times \Sym^{k-1}X$
$$
  \xymatrix{
   \CH^*(A) \ar[r]^-{j_{*}} \ar[dd]_-{}
  &   \CH^*(\Sym^{k} X\times \Sym^{k} X) \ar[r]^-{\rho^{*}} \ar[dd]_-{i_{*}}
  & \CH^*(X_0(k))  \ar[dd]_-{}  \
  \\ \\
 \CH^*(A) \ar[r]^-{j'_*}
    & \CH^*(\Sym^{2k+n} X) \ar[r]^-{}
  & \CH^*(V)
  }
$$
Here $V$ is the complement of $\Sym^{k-1}X\times \Sym^{k}X\cup \Sym^{k}X\times \Sym^{k-1}X$ in $\Sym^{k}X\times \Sym^{k} X$. The map $\rho$ denote the inclusion of $U$ into $\Sym^{k} X\times \Sym^{k} X$.
Let $Z$ belongs to kernel of $i_{*}$, that is
$$i_{*}(Z)=0\;.$$
Then by composing with $\rho^*$ coming from \ref{prop2} and $g_*$ we get that
$$\rho^*g_*(i_{*}(Z))=0$$
but the above is nothing but
$$\rho^*(dZ+\sum_i d_iZ_i)=0$$
by \ref{prop3}.

Therefore by the localisation exact sequence present in first row of the previous diagram we get that there exists $Z'$ in $\CH_*(\Sym^{k-1}X\times \Sym^{k}X\cup \Sym^{k}X\times \Sym^{k-1}X)$ such that
$$dZ+\sum_i d_iZ_i=j_{*}(Z')$$
by the commutativity of the previous rectangle it follows that
$$j'_{*}(Z')=0\;.$$
By the assumption it will follow that $Z'$ is rationally equivalent to zero, hence $dZ+\sum_i d_iZ_i$ is rationally equivalent to zero. Therefore we have the required result that $dZ$ is rationally equivalent to $-\sum_i d_iZ_i$.

\end{proof}

\begin{example}
Now consider $k=1$, then we have the push-forward homomorphism from $\CH_*(X^2)$ to $\CH_*(\Sym^{n+2}X)$. By theorem \ref{theorem1} we have that the push-forward homomorphism $\CH_*(X)$ to $\CH_*(\Sym^{n+2}X)$ is injective. Therefore if we take $Z$ to be an algebraic cycle in the kernel in the push-forward homomorphism $i_*$, we get that $Z$ is rationally equivalent to some rational multiple $-Z^t$ if we tensor the Chow groups with $\mathbb {Q}$, where $Supp(Z^t)=\{(y,x)|(x,y)\in Supp Z\}$. In particular let $z$ be a zero cycle in the kernel of $i_*$. Write
$$z=\sum_i(x_i,y_i)$$
then $z$ is rationally equivalent to
$$-\frac{m}{n}z^t=-\frac{m}{n}\sum_i(y_i,x_i)$$
since $z$ belongs to the kernel of $i_*$, $-\frac{m}{n}z^t$ also belongs to the kernel of $i_*$.

\end{example}

\section{Injectivity of the kernel of the push-forward homomorphism: some examples}
In this section we derive some nice consequences of the Collino's theorem. 
\begin{theorem}
\label{theorem5}
Let $C$ be a curve of  genus $g$. Consider the natural morphism from $\Sym^g C\times \Sym^{g} C$ to $J(C)\times J(C)$. Let $\Th$ denote the theta divisor in $J(C)$. Let $j$ denote the closed embedding of $\Th\times \Th$ into $J(C)\times J(C)$. Then the push-forward homomorphism $j_*$ from $\CH_k(\Th\times\Th)$ to $\CH_k(J(C)\times J(C))$ is injective.
\end{theorem}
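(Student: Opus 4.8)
The plan is to transfer the injectivity already established for symmetric powers (Theorem \ref{theorem3}) to the theta divisor by means of the three facts recalled in the introduction. Write $u_m\colon \Sym^m C\lra J(C)$ for the Abel--Jacobi morphism attached to the fixed base point $p$. Recall that $u_{g-1}$ is birational onto $\Th$, that $u_g$ is birational onto $J(C)$, and that for $n\geq 2g-1$ the morphism $u_n$ is a projective bundle over $J(C)$. Fixing such an $n$ and the compatible translate of $\Th$, adding $(n-g+1)$ copies of $p$ to a divisor corresponds under Abel--Jacobi to the translation carrying $\Th\subset\Pic^{g-1}$ to its image in $\Pic^{n}=J(C)$, so the closed embedding $i_{g-1,g-1}^{n,n}$ of $\Sym^{g-1}C\times\Sym^{g-1}C$ into $\Sym^n C\times\Sym^n C$ sits in a commutative square
$$
(u_n\times u_n)\circ i_{g-1,g-1}^{n,n}=j\circ(u_{g-1}\times u_{g-1}).
$$
By Theorem \ref{theorem3} the push-forward $(i_{g-1,g-1}^{n,n})_*$ is injective, and this is the engine I would drive the whole argument with.

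Next I would record the maps supplied by the geometric facts. Since $u_{g-1}\times u_{g-1}$ is proper, surjective and generically injective onto $\Th\times\Th$, its push-forward $(u_{g-1}\times u_{g-1})_*$ is surjective on Chow groups with $\QQ$-coefficients. Since $u_n\times u_n$ is a product of projective bundles, the pull-back $(u_n\times u_n)^*$ is split injective by the projective bundle formula, and the same formula applied to the restriction of this bundle over $\Th\times\Th$ shows that $V^*\colon\CH_*(\Th\times\Th)\lra\CH_*(W\times W)$ is injective, where $W=u_n^{-1}(\Th)$ and $V\colon W\times W\to\Th\times\Th$ is the restricted bundle (this holds even though $\Th$ is singular). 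Taking push-forwards in the square gives $j_*\circ(u_{g-1}\times u_{g-1})_*=(u_n\times u_n)_*\circ(i_{g-1,g-1}^{n,n})_*$. Now take $\alpha\in\CH_k(\Th\times\Th)$ with $j_*\alpha=0$; using surjectivity I would lift $\alpha=(u_{g-1}\times u_{g-1})_*\beta$, so that, setting $\gamma=(i_{g-1,g-1}^{n,n})_*\beta$, one has $(u_n\times u_n)_*\gamma=0$. Thus $\gamma$ lies simultaneously in the kernel of the bundle push-forward and in the image of the injective map $(i_{g-1,g-1}^{n,n})_*$, and it suffices to prove $\gamma=0$: Theorem \ref{theorem3} then forces $\beta=0$ and hence $\alpha=0$.

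To prove $\gamma=0$ I would exploit flat base change along $u_n\times u_n$ over $j$, whose fibre product is $W\times W$ with inclusion $\iota\colon W\times W\hookrightarrow\Sym^n C\times\Sym^n C$, giving
$$
(u_n\times u_n)^*\circ j_*=\iota_*\circ V^*.
$$
Combined with injectivity of $(u_n\times u_n)^*$ and of $V^*$, this identity shows that the vanishing I want is equivalent to injectivity of $\iota_*$, i.e. to a Collino-type statement for the embedding $W\times W\hookrightarrow\Sym^n C\times\Sym^n C$ of the preimage of the theta divisor into an honest product of symmetric powers. Here I would model $W$ birationally by the incidence projective bundle $\{(E,D): E\in\Sym^{g-1}C,\ D\in|E+(n-g+1)p|\}$ over the genuine symmetric power $\Sym^{g-1}C$, and run the correspondence argument of Section \ref{Collino} (as in the proofs of Lemma \ref{lemma3} and Proposition \ref{prop3}) on these symmetric-power models, where the Chow moving lemma and the identification $\CH_*(X/G)=\CH_*(X)^G$ are available.

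The main obstacle is exactly this last step. The variety $W=u_n^{-1}(\Th)$ is a projective bundle over $\Th$ but is not itself a symmetric power, so Theorem \ref{theorem1} does not apply to $W\hookrightarrow\Sym^n C$ verbatim; worse, the birational identification of $\Th$ with $\Sym^{g-1}C$ is an isomorphism only away from the jumping locus $W^1_{g-1}$, precisely where $\Th$ is singular and the Abel--Jacobi fibres become positive-dimensional. Controlling the difference between $\CH_*(W)$ and the Chow groups of the incidence bundle over $\Sym^{g-1}C$ therefore requires a localization argument supported on (the bundle over) $W^1_{g-1}$, and this is where the genuine work lies. A second, structural point worth emphasising is that the two-factor statement cannot be bootstrapped from the one-factor injectivity of $\Th\hookrightarrow J(C)$, since Chow groups admit no Künneth decomposition; this is exactly why the product form of Collino's theorem, Theorem \ref{theorem3}, rather than the single-variety Theorem \ref{theorem1}, must serve as the input.
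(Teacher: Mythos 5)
Your reduction stalls at exactly the point you flag, and that point is the entire content of the theorem. The preliminaries are fine: the square $(u_n\times u_n)\circ i_{g-1,g-1}^{n,n}=j\circ(u_{g-1}\times u_{g-1})$ commutes, $(u_{g-1}\times u_{g-1})_*$ is surjective with $\QQ$-coefficients, and flat base change along the projective bundle $u_n\times u_n$ gives $(u_n\times u_n)^*\circ j_*=\iota_*\circ V^*$. But the statement you reduce to, injectivity of $\iota_*$ for $W\times W\hra \Sym^nC\times\Sym^nC$ with $W=u_n^{-1}(\Th)$, is not a simplification: since $W\times W\to\Th\times\Th$ and $\Sym^nC\times\Sym^nC\to J(C)\times J(C)$ are products of projective bundles, the projection formula gives $\iota_*(\xi^I\cap V^*\alpha)=\xi^I\cap(u_n\times u_n)^*j_*\alpha$, so under the two projective bundle decompositions $\iota_*$ is injective if and only if $j_*$ is. Your argument transports the problem one bundle level up without gaining anything, and the step you defer (running the correspondence argument on an incidence-bundle model of $W$ while controlling the discrepancy over $W^1_{g-1}$) is precisely the theorem; you acknowledge this, but it means no proof has been given. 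The intermediate move with $\gamma=(i_{g-1,g-1}^{n,n})_*\beta$ is also a dead end: a class killed by a projective-bundle push-forward and lying in the image of an injective map need not vanish, and you in effect abandon that line when you switch to base change.

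The paper's proof avoids the preimage $W$ altogether. It uses the birational quotient maps $q\colon\Sym^gC\times\Sym^gC\to J(C)\times J(C)$ and $q_{\Th}\colon\Sym^{g-1}C\times\Sym^{g-1}C\to\Th\times\Th$ to push the correspondence $\Gamma$ of Section \ref{Collino} forward to a correspondence $\Gamma_1=(q\times q_{\Th})(\Gamma)$ living directly on the Jacobian side, proves the identity $\rho^*\Gamma_{1*}j_*=\rho^*$ over the open set where $q$ and $q_{\Th}$ are isomorphisms (the complement of the image of the smaller symmetric powers, which in particular contains the locus where $\Th$ is singular and the Abel--Jacobi fibres jump), and then runs the localization and induction scheme of Theorem \ref{theorem3}. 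In other words, the singular locus of $\Th$ that obstructs your approach is exactly what the paper pushes into the closed subset handled by induction. To salvage your route you would have to prove injectivity of $\iota_*$ by the same correspondence-plus-localization device, at which point you would simply be reproducing the paper's argument upstairs.
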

\begin{proof}
First of all notice that. 

$$
  \diagram
  \Sym^{g-1}C\times \Sym^{g-1}C \ar[dd]_-{q_{\Th}} \ar[rr]^-{j'} & & \Sym^g C\times \Sym^{g}C \ar[dd]^-{q} \\ \\
  \Th\times \Th \ar[rr]^-{j} & & J(C)\times J(C)
  \enddiagram
  $$
Considering the above commutative diagram, the proof goes in the same line as in \ref{subsection2}, where we consider a correspondence $\Gamma_1$, and then we consider the push-forward induced by $\Gamma_1$. Here  $\Gamma_1$ will be $(q\times q_{\Th})(\Gamma)$, where $\Gamma$ is as in \ref{subsection2}. Then we prove that $\rho^*\Gamma_*j_*$ is equal to $\rho^*$, where $\rho$ is the open embedding of $q((\Sym^{g-2}C\times \Sym^{g-1} C)\cup (\Sym^{g-1} C\times \Sym^{g-2}C))$. Then we proceed as in \ref{subsection2}.

\end{proof}
\begin{corollary}
\label{corollary2}
Let $C$ be a smooth projective curve of genus $g$. Consider the natural morphism from $\Sym^{g} C\times \cdots\times \Sym^{g} C$ to $J(C)\times \cdots\times J(C)$. Let $\Th$ be the  theta divisor embedded into $J(C)$.
Let $j$ denote the closed embedding of $\Th\times \cdots\times \Th$
into $J(C)\times\cdots\times J(C)$. Then $j_*$ from $\CH_k(\Th\times\cdots\times \Th)$ to $\CH_k(J(C)\times\cdots J(C))$ is injective for $k\geq 0$.
\end{corollary}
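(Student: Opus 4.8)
The plan is to run the proof of Theorem \ref{theorem5} with $r$ factors in place of two, feeding in the $r$-fold generalization of Theorem \ref{theorem3} at the point where the latter invokes Collino on products of symmetric powers. Let $a\colon\Sym^g C\to J(C)$ and $a_{\Th}\colon\Sym^{g-1}C\to\Th$ denote the Abel--Jacobi morphisms; both are birational (a general effective divisor class of degree $g$, resp. $g-1$, has a unique representative), and I write $Q=a\times\cdots\times a$ and $Q_{\Th}=a_{\Th}\times\cdots\times a_{\Th}$ for their $r$-fold products. First I would record the commutative square
$$
\diagram
\Sym^{g-1}C\times\cdots\times\Sym^{g-1}C \ar[dd]_-{Q_{\Th}} \ar[rr]^-{j'} & & \Sym^g C\times\cdots\times\Sym^g C \ar[dd]^-{Q} \\ \\
\Th\times\cdots\times\Th \ar[rr]^-{j} & & J(C)\times\cdots\times J(C)
\enddiagram
$$
with $j'$ the closed embedding built factorwise from $\Sym^{g-1}C\hra\Sym^g C$. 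Since each $a$ is a proper birational morphism of smooth projective varieties, $a_*a^*=\id$ by the projection formula, so $Q$ and $Q_{\Th}$ allow one to transport cycles between the symmetric-power picture and the Jacobian/theta picture on the dense open locus where they are isomorphisms; this is exactly what lets the Collino correspondence descend to the bottom row.

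Next I would set up the $r$-fold analogue of the machinery of \S\ref{subsection2}. Let $\Gamma$ be the correspondence on $(\Sym^g C\times\cdots\times\Sym^g C)\times(\Sym^{g-1}C\times\cdots\times\Sym^{g-1}C)$ attached to the projection $C^{gr}\to C^{(g-1)r}$ that drops one coordinate in each of the $r$ blocks, formed exactly as in \ref{subsection2} but with $r$ blocks rather than two; the needed $r$-fold form of Theorem \ref{theorem3} is obtained by the same bookkeeping indicated there, the Chow moving lemma remaining available because $\Sym^g C\times\cdots\times\Sym^g C$ is the quotient of the smooth projective variety $C^{gr}$ by the finite group $S_g\times\cdots\times S_g$. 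I then set $\Gamma_1=(Q\times Q_{\Th})(\Gamma)$, a correspondence supported on $(J(C)\times\cdots\times J(C))\times(\Th\times\cdots\times\Th)$, and let $g_*=\Gamma_{1*}$.

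The core is the analogue of Proposition \ref{prop2}: with $\rho$ the open embedding of the complement in $\Th\times\cdots\times\Th$ of $\bigcup_{a=1}^r \Th\times\cdots\times a_{\Th}(\Sym^{g-2}C)\times\cdots\times\Th$ (the $a$-th factor replaced by the special divisor $a_{\Th}(\Sym^{g-2}C)\subset\Th$), one should have $\rho^*\circ g_*\circ j_*=\rho^*$. As in Proposition \ref{prop2} this comes from computing $(j'\times\id)^{-1}\Gamma=\Delta\cup D$ with $D$ supported on $\bigcup_a\Sym^{g-1}C\times\cdots\times\Sym^{g-2}C\times\cdots\times\Sym^{g-1}C$, so that $(j'\times\id)^*\Gamma=\Delta+Y$ as a cycle with $Y$ on the boundary; pushing down through $Q$ and $Q_{\Th}$, the diagonal contributes the identity and $Y$ contributes a class killed by $\rho^*$. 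Granting this, the proof ends as in Theorems \ref{theorem3} and \ref{theorem5}: for $z\in\CH_k(\Th\times\cdots\times\Th)$ with $j_*(z)=0$, the identity gives $\rho^*(z)=\rho^*g_*j_*(z)=0$; the localization sequence writes $z=\iota_*(z')$ with $z'$ supported on the boundary $A=\bigcup_a A_a$; commutativity of the localization square gives $\iota'_*(z')=0$ in $\CH_*(J(C)\times\cdots\times J(C))$; and the sequence $\CH_*(A_a\cap A_b)\to\bigoplus_a\CH_*(A_a)\to\CH_*(A)\to 0$ reduces injectivity on $A$ to injectivity of each boundary pushforward, which, transported back through the birational maps, is the $r$-fold form of Theorem \ref{theorem3}.

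The step I expect to be the real obstacle is this analogue of Proposition \ref{prop2} under the birational transport by $Q$ and $Q_{\Th}$. The morphisms $a$ are birational but not isomorphisms, failing to be so precisely over the loci where $h^0$ of a line bundle of degree $g$ (resp. $g-1$) jumps, and one must check that pushing $\Gamma$ forward along $Q\times Q_{\Th}$ still produces $\Delta+(\text{boundary})$ as a cycle class on $(J(C)\times\cdots\times J(C))\times(\Th\times\cdots\times\Th)$, with no stray components from these exceptional loci surviving $\rho^*$. Equivalently, one must choose $\rho$ to land in the common isomorphism locus of all $r$ copies of $a$ while keeping it dense and compatible with the boundary stratification. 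Once this is arranged the identity $a_*a^*=\id$ does the rest, and everything else is the formal $r$-fold repetition of the arguments of \S\ref{subsection2} and Theorem \ref{theorem5}.
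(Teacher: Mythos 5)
Your proposal is correct in approach and matches the paper's intent: the paper's own proof of this corollary is a single sentence deferring to Theorem \ref{theorem5}, i.e.\ it implicitly asks the reader to rerun the two-factor argument (correspondence $\Gamma_1=(Q\times Q_{\Th})(\Gamma)$, the analogue of Proposition \ref{prop2}, localization, and induction on the boundary strata) with $r$ factors, which is exactly what you spell out. You in fact give considerably more detail than the paper, including an honest flag of the one genuinely delicate point --- controlling the exceptional loci of the Abel--Jacobi maps when transporting the correspondence identity to the Jacobian/theta side --- which the paper passes over in silence.
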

\begin{proof}
The proof follows from the previous theorem \ref{theorem5}.

\end{proof}

Let $i$ be involution on an abelian variety $A$ given by
$$i(a)=-a\;.$$
\begin{corollary}
Let $C$ be as in the previous corollary \ref{corollary2}, and suppose that $i(\Th)=\Th$. Then the push-forward homomorphism induced by the closed embedding of $\Th\times\cdots\times \Th/\{i\}$ to $J(C)\times\cdots\times J(C)/\{i\}$ is injective at the level of $\CH_k$.
\end{corollary}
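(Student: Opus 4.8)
The plan is to deduce this from Corollary \ref{corollary2} by passing to invariants of the Chow groups under the involution, using the identification $\CH_*(Y/G)=\CH_*(Y)^G$ with $\QQ$-coefficients for a finite group $G$, recorded in \cite{Fulton}[Example 1.7.6] and already exploited above. Write $A=J(C)\times\cdots\times J(C)$ and $T=\Th\times\cdots\times\Th$, and let $G=\{1,i\}\cong\ZZ/2\ZZ$ be the group generated by the involution $i(a)=-a$, acting componentwise on $A$. The hypothesis $i(\Th)=\Th$ guarantees that $i$ preserves $T$, so that the closed embedding $j\colon T\hra A$ is $G$-equivariant and descends to a closed embedding $\bar j\colon T/G\hra A/G$; our task is to show $\bar j_*$ is injective on $\CH_k$.

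First I would record the quotient square. Writing $\pi_T\colon T\to T/G$ and $\pi_A\colon A\to A/G$ for the quotient maps, the commutative diagram
$$
  \diagram
  T \ar[dd]_-{\pi_T} \ar[rr]^-{j} & & A \ar[dd]^-{\pi_A} \\ \\
  T/G \ar[rr]^-{\bar j} & & A/G
  \enddiagram
$$
gives the identity $\pi_{A*}\circ j_*=\bar j_*\circ\pi_{T*}$ of push-forward homomorphisms. By \cite{Fulton}[Example 1.7.6], since $|G|=2$ is invertible in $\QQ$, the push-forwards $\pi_{T*}$ and $\pi_{A*}$ restrict to isomorphisms $\CH_k(T)^G\xrightarrow{\sim}\CH_k(T/G)$ and $\CH_k(A)^G\xrightarrow{\sim}\CH_k(A/G)$, where the $G$-action on the Chow groups is the one induced by $i_*$ (concretely, $\frac{1}{2}\pi^*$ furnishes the inverse).

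Next I would use the equivariance to cut down the injective map of Corollary \ref{corollary2}. Because $j$ is $G$-equivariant, $j_*$ commutes with $i_*$ and hence carries $G$-invariants to $G$-invariants, restricting to a homomorphism $\CH_k(T)^G\to\CH_k(A)^G$; as the restriction of an injective map to a subgroup, and as $j_*$ is injective by Corollary \ref{corollary2}, this restriction is injective. Now suppose $\bar j_*(\bar\alpha)=0$ for some $\bar\alpha\in\CH_k(T/G)$, and let $\alpha\in\CH_k(T)^G$ be the unique invariant class with $\pi_{T*}(\alpha)=\bar\alpha$. Then
$$
\pi_{A*}\bigl(j_*(\alpha)\bigr)=\bar j_*\bigl(\pi_{T*}(\alpha)\bigr)=\bar j_*(\bar\alpha)=0\;.
$$
Since $j_*(\alpha)$ lies in $\CH_k(A)^G$ and $\pi_{A*}$ is injective on that subgroup, $j_*(\alpha)=0$, whence $\alpha=0$ by injectivity of $j_*$ on invariants, and finally $\bar\alpha=\pi_{T*}(\alpha)=0$. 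This shows $\bar j_*$ is injective.

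The only genuinely delicate point is the bookkeeping around the Fulton identification: one must verify that the $G$-action entering Example 1.7.6 is exactly the one through which $j_*$ is equivariant, and that the factor $2=|G|$ being invertible is what makes $\pi_{T*}$ and $\pi_{A*}$ isomorphisms on the invariant subgroups. Once this is in place the argument is purely formal, since injectivity on the full Chow group forces injectivity on the $G$-invariant subgroup and hence, via the isomorphisms above, on the quotient.
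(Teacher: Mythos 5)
Your proof is correct and follows exactly the route the paper intends: the paper's own proof consists of citing Corollary \ref{corollary2} together with the identification $\CH_k(X/G)=\CH_k(X)^G$ from \cite{Fulton}[Example 1.7.6], and your argument simply fills in the formal bookkeeping (equivariance of $j$, the commuting square of push-forwards, and the isomorphisms on invariant subgroups) that the paper leaves implicit. No gaps; your write-up is in fact more complete than the paper's.
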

\begin{proof}
Follows from corollary \ref{corollary2} and from the fact that for a projective variety $X$ and a finite group $G$ acting on $X$ we have
$$\CH_k(X/G)=\CH_k(X)^G$$
from \cite{Fulton}, example $1.7.6$.
where $\CH_k(X)^G$ denote the $G$-invariants of $\CH_k(X)$ under the action of $G$.
\end{proof}



\begin{thebibliography}{AAAAA}

\bibitem[BG]{BG} K. Banerjee and V. Guletskii, {\em Rational equivalence for line configurations
    on cubic hypersurfaces in $\mathbb P^5$.}, {\small \tt arXiv:1405.6430v1}, 2014.
\bibitem[BI]{BI}K.Banerjee, Jaya NN Iyer, {\em On the kernel of the push-forward homomorphism between Chow groups},{\small \tt arxiv::1504.07887v1},2015.



\bibitem[BL]{BL} C.Birkenhake and H.Lange, {\em Complex abelian varieties}, Springer-Verlag Berlin, Heidelberg, New York, 2003.


\bibitem[Co]{Collino} A. Collino, {\em The rational equivalence ring of symmetric product of curves}, Illinois journal of mathematics, 19, no.4, 567-583, 1975.

\bibitem[Fu]{Fulton} W. Fulton, {\em Intersection theory}, Ergebnisse der Mathematik und ihrer Grenzgebiete (3), 2.
      Springer-Verlag, Berlin, 1984.

\bibitem[Mil]{Mil}J.Milne, {\em Abelian Varieties},
Online notes, {\small \tt http://www.jmilne.org/math/CourseNotes/av.html}




\end{thebibliography}
\end{document}